\newtheorem{theorem}{Theorem}
\newtheorem*{theorem*}{Theorem}
\newtheorem{lemma}{Lemma}
\newtheorem{prop}[]{Proposition}
\newtheorem{definition}{Definition}
\pgfplotsset{compat=1.15}
\numberwithin{equation}{section}
\begin{document}

\title[]
  {Knot invariants for rail knotoids}
\author{Dimitrios Kodokostas}
\address{Department of Mathematics,
National Technical University of Athens,
Zografou campus,
{GR-15780} Athens, Greece.}
\email{dkodokostas@math.ntua.gr}

\author{Sofia Lambropoulou}
\address{Department of Mathematics,
National Technical University of Athens,
Zografou campus,
{GR-15780} Athens, Greece.}
\email{sofia@math.ntua.gr}
\urladdr{http://www.math.ntua.gr/$\sim$sofia}

\subjclass[2010]{57K10, 57K14}
\date{}

\keywords{rail arc, rail isotopy, rail knotoid diagram, rail equivalence, rail knotoid, knot, knot invariant}

\begin{abstract} To each rail knotoid we associate two unoriented knots along with their oriented counterparts, thus deriving invariants for rail knotoids based on these associations. We then translate them to invariants of rail isotopy for rail arcs.   
\end{abstract}

\dedicatory{In memory of V.F.R. Jones}

\maketitle

\section*{Introduction} \label{section_introduction}

In \cite{KL} we studied isotopies in $\mathbb{R}^3$ between rail arcs  which are just arcs with endpoints on two fixed parallel lines and which have no other point on these lines. Rail arcs were introduced in \cite{GK}.  During the isotopies we allow the endpoints to move freely on the two lines, the rails. The way to study these isotopies was by studying the projection of the arcs on the plane of the lines and introducing the notion of rail knotoid diagrams and their equivalence, proving that rail arc isotopy in space is equivalent to rail knotoid diagram equivalence on the plane. We provide a small summary of all these in the next paragraph. In \S 3 we provide a way of distinguishing rail knotoids by defining analogues for rail knotoids of the normalized bracket, the Jones, the HOMFLYPT and the Kauffman polynomials. We achieve this by assigning in \S 2 to each rail knotoid two well-defined isotopy classes of knots. In \S 4 we translate these invariants to invariants of space isotopy of rail arcs. In the last paragraph we comment on the connection of the rail knotoids to the usual knotoids which were first introduced in \cite{Tu}.

\section{A summary on rail arcs and rail knotoids} \label{section_rail_knotoids}

We consider the space $\mathbb{R}^3$ equipped with two parallel lines $\ell_1,\ell_2$. We call as rail arc any connected, embedded simple arc $c$ in $\mathbb{R}^3$ with one endpoint on $\ell_1$ and the other on $\ell_2$ and otherwise missing the rails at all (see Figure \ref{figure_a_rail_arc}). If we call $\ell_1,\ell_2$ as first and second rail respectivey, then we can consider a natural orientation on any such arc $c$ from the endpoint on $\ell_1$ to the endpoint on $\ell_2$, in which case we call the enpoints as leg and head respectively. Although matters of orientation for rail arcs will not be of out-most importance in what follows, we shall keep this terminology in all cases whenever referring to the endpoints.

\begin{figure}[!h]
	\centering
\definecolor{ffqqqq}{rgb}{1.,0.,0.}
\begin{tikzpicture}[line cap=round,line join=round,>=triangle 45,x=1.0cm,y=1.0cm]
\clip(3.5065194234842947,4.4) rectangle (6.668613714896429,7.670929855692563);
\draw [shift={(6.165968007865688,6.320138130917781)},line width=0.4pt]  plot[domain=-0.9090662575001538:0.9923514156973319,variable=\t]({1.*0.24226291935302283*cos(\t r)+0.*0.24226291935302283*sin(\t r)},{0.*0.24226291935302283*cos(\t r)+1.*0.24226291935302283*sin(\t r)});
\draw [shift={(5.219811882128152,4.820081265772829)},line width=0.4pt]  plot[domain=1.2981046296861078:1.718995016247648,variable=\t]({1.*2.023928926136264*cos(\t r)+0.*2.023928926136264*sin(\t r)},{0.*2.023928926136264*cos(\t r)+1.*2.023928926136264*sin(\t r)});
\draw [shift={(6.204868053391834,6.34658325187964)},line width=0.4pt]  plot[domain=1.8031194837987232:3.0144356198078617,variable=\t]({1.*0.48491668043597475*cos(\t r)+0.*0.48491668043597475*sin(\t r)},{0.*0.48491668043597475*cos(\t r)+1.*0.48491668043597475*sin(\t r)});
\draw [shift={(6.309704444107993,6.62558684206024)},line width=0.4pt]  plot[domain=-0.8893986781627756:0.8893986781627703,variable=\t]({1.*0.21663861102863105*cos(\t r)+0.*0.21663861102863105*sin(\t r)},{0.*0.21663861102863105*cos(\t r)+1.*0.21663861102863105*sin(\t r)});
\draw [shift={(6.02638382590094,6.382174114055326)},line width=0.4pt]  plot[domain=3.0561739652779294:4.962142713419938,variable=\t]({1.*0.3036244470810311*cos(\t r)+0.*0.3036244470810311*sin(\t r)},{0.*0.3036244470810311*cos(\t r)+1.*0.3036244470810311*sin(\t r)});
\draw [shift={(5.913020909225199,6.165646600533312)},line width=0.4pt]  plot[domain=0.7476423546557244:1.4462476259445392,variable=\t]({1.*0.5255705655408514*cos(\t r)+0.*0.5255705655408514*sin(\t r)},{0.*0.5255705655408514*cos(\t r)+1.*0.5255705655408514*sin(\t r)});
\draw [shift={(6.234937330459185,6.3080246633316115)},line width=0.4pt]  plot[domain=1.1606768062067996:1.8416066999258576,variable=\t]({1.*0.5297547816122091*cos(\t r)+0.*0.5297547816122091*sin(\t r)},{0.*0.5297547816122091*cos(\t r)+1.*0.5297547816122091*sin(\t r)});
\draw [shift={(4.6292332883388685,5.861498067222166)},line width=0.4pt]  plot[domain=4.968975415392938:6.257436701285708,variable=\t]({1.*0.5007769968756877*cos(\t r)+0.*0.5007769968756877*sin(\t r)},{0.*0.5007769968756877*cos(\t r)+1.*0.5007769968756877*sin(\t r)});
\draw [shift={(4.796311533076281,5.838184648633326)},line width=0.4pt]  plot[domain=0.031232750135664213:1.3166279585845444,variable=\t]({1.*0.3336955002566434*cos(\t r)+0.*0.3336955002566434*sin(\t r)},{0.*0.3336955002566434*cos(\t r)+1.*0.3336955002566434*sin(\t r)});
\draw [shift={(4.804753146134957,5.854290983986243)},line width=0.4pt]  plot[domain=3.253165795916294:5.163232756078342,variable=\t]({1.*0.32831994734181613*cos(\t r)+0.*0.32831994734181613*sin(\t r)},{0.*0.32831994734181613*cos(\t r)+1.*0.32831994734181613*sin(\t r)});
\draw [shift={(5.337200378913295,6.745049224444948)},line width=0.4pt]  plot[domain=4.9134570762437075:5.832726076495774,variable=\t]({1.*1.1133137627604077*cos(\t r)+0.*1.1133137627604077*sin(\t r)},{0.*1.1133137627604077*cos(\t r)+1.*1.1133137627604077*sin(\t r)});
\draw [shift={(4.642083259921757,9.789377117550492)},line width=0.4pt]  plot[domain=4.827682869882762:4.930718586642295,variable=\t]({1.*4.235767135465101*cos(\t r)+0.*4.235767135465101*sin(\t r)},{0.*4.235767135465101*cos(\t r)+1.*4.235767135465101*sin(\t r)});
\draw (3.66203225748817,4.910577052123722) node[anchor=north west] {$\ell_1$};
\draw (4.795980005433095,4.871698843622752) node[anchor=north west] {$\ell_2$};
\draw (5.281957611695206,7.152553742346302) node[anchor=north west] {$c$};
\draw [shift={(5.115547940464045,5.172922045256557)},line width=0.4pt]  plot[domain=1.8407341451325392:2.3416789290169016,variable=\t]({1.*1.639697913995095*cos(\t r)+0.*1.639697913995095*sin(\t r)},{0.*1.639697913995095*cos(\t r)+1.*1.639697913995095*sin(\t r)});
\draw [shift={(4.154020345409349,5.82935154278977)},line width=0.4pt]  plot[domain=2.495303727797669:4.415958829430152,variable=\t]({1.*0.4864099869394543*cos(\t r)+0.*0.4864099869394543*sin(\t r)},{0.*0.4864099869394543*cos(\t r)+1.*0.4864099869394543*sin(\t r)});
\draw [shift={(4.198744547184168,5.975591806272608)},line width=0.4pt]  plot[domain=4.415871813940938:5.489248162548796,variable=\t]({1.*0.6393363304296511*cos(\t r)+0.*0.6393363304296511*sin(\t r)},{0.*0.6393363304296511*cos(\t r)+1.*0.6393363304296511*sin(\t r)});
\draw [shift={(4.4655152331748535,5.704340475685186)},line width=0.4pt]  plot[domain=-0.35230872467414187:1.1133214788988537,variable=\t]({1.*0.2347271279265541*cos(\t r)+0.*0.2347271279265541*sin(\t r)},{0.*0.2347271279265541*cos(\t r)+1.*0.2347271279265541*sin(\t r)});
\draw [shift={(4.186342079628072,5.1096278206348416)},line width=0.4pt]  plot[domain=1.1270142290891665:1.937138884575772,variable=\t]({1.*0.8916757943429974*cos(\t r)+0.*0.8916757943429974*sin(\t r)},{0.*0.8916757943429974*cos(\t r)+1.*0.8916757943429974*sin(\t r)});
\draw [shift={(4.681206570762683,5.963942130321101)},line width=0.4pt]  plot[domain=1.5145815863593217:2.6106294192079087,variable=\t]({1.*0.19752931897705056*cos(\t r)+0.*0.19752931897705056*sin(\t r)},{0.*0.19752931897705056*cos(\t r)+1.*0.19752931897705056*sin(\t r)});
\draw [line width=1.2pt,color=ffqqqq] (3.866941057018287,7.619092244357937)-- (3.866941057018287,5.565026895223377);
\draw [line width=1.2pt,color=ffqqqq] (3.866941057018287,5.318798241383902)-- (3.866941057018287,4.826340933704954);
\draw [line width=1.2pt,color=ffqqqq] (4.756320587529386,7.619092244357937)-- (4.756320587529386,5.616864506558003);
\draw [line width=1.2pt,color=ffqqqq] (4.756320587529386,4.826340933704954)-- (4.756320587529386,5.467831373970953);
\begin{scriptsize}
\draw [fill=black] (4.756320587529386,5.377115554135357) circle (1.0pt);
\draw [fill=black] (3.866941057018287,5.942135303467903) circle (1.0pt);
\end{scriptsize}
\end{tikzpicture}
	\caption{A rail arc $c$ in $\mathbb{R}^3$ with its endpoints on the rails $\ell_1,\ell_2$.}
\label{figure_a_rail_arc}
\end{figure}
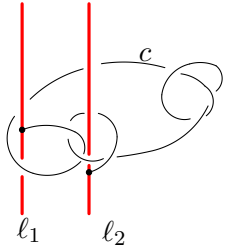

 We call two rail arcs  $c_1,c_2$  as rail isotopic, whenever there exists an isotopy  of  $\mathbb{R}^3$ taking one onto the other so that each rail maps onto itself (but not necessarily pointwise) throughout the isotopy.  In particular, this implies that at each time throughout the isotopy, the image of the arc is a rail arc, and each endpoint remains on the same rail with the freedom to move up and down on it. We call such an isotopy as a rail isotopy in $\mathbb{R}^3$.

\begin{figure}[!h]
	\centering
	\definecolor{ffqqqq}{rgb}{1.,0.,0.}
	\begin{tikzpicture}[line cap=round,line join=round,>=triangle 45,x=1.0cm,y=1.0cm]
	\clip(-0.2,0.11554440862388769) rectangle (6.7,3.9224116121245562);
	\draw (3.1590831577667435,2.301773859777129) node[anchor=north west] {$\sim$};
	\draw [line width=0.4pt,dash pattern=on 1pt off 1pt] (0.6445315826999247,1.6964695113948716)-- (1.197996206547054,2.608469175390208);
	\draw [line width=0.4pt,dash pattern=on 1pt off 1pt] (1.197996206547054,2.608469175390208)-- (2.00322258244179,1.6964695113948716);
	\draw [line width=0.4pt] (0.6445315826999247,1.6964695113948716)-- (2.00322258244179,1.6964695113948716);
	\draw [line width=0.4pt] (0.6445315826999247,1.6964695113948716)-- (1.1903889753849768,1.2576045696740883);
	\draw [line width=0.4pt] (2.00322258244179,1.6964695113948716)-- (2.2979448314727335,1.8773675685835884);
	\draw [line width=0.4pt] (2.2979448314727335,1.8773675685835884)-- (1.9586628065743275,2.216649593481997);
	\draw [line width=0.4pt] (0.48114431104901,2.457430385345384)-- (0.15709644872051595,0.7463966880611413);
	\draw [line width=0.4pt,color=ffqqqq] (1.7507157590559494,3.8583368107323617)-- (1.7507157590559494,2.332738656267294);
	\draw [line width=0.4pt,color=ffqqqq] (1.7507157590559494,2.1947604305853257)-- (1.7507157590559494,2.05248087175696);
	\draw [line width=0.4pt,color=ffqqqq] (1.7507157590559494,1.6146976138235303)-- (1.7507157590559494,0.5530732133349612);
	\draw [line width=0.4pt,color=ffqqqq] (1.7507157590559494,1.9119991082678132)-- (1.7507157590559494,1.7552530021503596);
	\draw [line width=0.4pt] (0.48114431104901,2.457430385345384)-- (0.9528997528226542,2.380551720760049);
	\draw [line width=0.4pt] (1.1054122664252921,2.3556978296544338)-- (1.3771607537773767,2.311412890974834);
	\draw [line width=0.4pt] (1.567414459270753,2.2804085834129504)-- (1.9586628065743275,2.216649593481997);
	\draw [line width=0.4pt,color=ffqqqq] (2.3745569016110837,3.8583368107323617)-- (2.3745569016110837,0.5530732133349612);
	\draw [line width=0.4pt] (4.581919947463449,1.6964695113948716)-- (5.135384571310579,2.608469175390208);
	\draw [line width=0.4pt] (5.135384571310579,2.608469175390208)-- (5.940610947205315,1.6964695113948716);
	\draw [line width=0.4pt] (4.581919947463449,1.6964695113948716)-- (5.127777340148501,1.2576045696740883);
	\draw [line width=0.4pt] (5.940610947205315,1.6964695113948716)-- (6.235333196236258,1.8773675685835884);
	\draw [line width=0.4pt] (6.235333196236258,1.8773675685835884)-- (5.896051171337852,2.216649593481997);
	\draw [line width=0.4pt] (4.418532675812535,2.457430385345384)-- (4.094484813484041,0.7463966880611413);
	\draw [line width=0.4pt,color=ffqqqq] (5.688104123819474,3.8583368107323617)-- (5.688104123819474,2.332738656267294);
	\draw [line width=0.4pt,color=ffqqqq] (5.688104123819474,2.1947604305853257)-- (5.688104123819474,2.05248087175696);
	\draw [line width=0.4pt] (4.418532675812535,2.457430385345384)-- (4.890288117586179,2.380551720760049);
	\draw [line width=0.4pt] (5.0428006311888165,2.3556978296544338)-- (5.314549118540901,2.311412890974834);
	\draw [line width=0.4pt] (5.504802824034278,2.2804085834129504)-- (5.896051171337852,2.216649593481997);
	\draw [line width=0.4pt,color=ffqqqq] (6.311945266374608,3.8583368107323617)-- (6.311945266374608,0.5530732133349612);
	\draw [line width=0.4pt,color=ffqqqq] (5.688104123819474,1.9119991082678132)-- (5.688104123819474,0.5530732133349612);
	\draw (1.6363362763664835,0.6049987633596879) node[anchor=north west] {$\ell_1$};
	\draw (2.278065319242307,0.6267522902368347) node[anchor=north west] {$\ell_2$};
	\draw (5.628108458322879,0.670259343991128) node[anchor=north west] {$\ell_1$};
	\draw (6.226330447444409,0.6485058171139813) node[anchor=north west] {$\ell_2$};
	\draw [line width=0.4pt] (0.15709644872051595,0.7463966880611413)-- (1.6643261105468938,0.9090436340637899);
	\draw [line width=0.4pt] (1.8494231157593275,0.929017672111554)-- (2.3745569016110837,0.9856854837097546);
	\draw [line width=0.4pt] (4.094484813484041,0.7463966880611413)-- (5.601714475310418,0.9090436340637899);
	\draw [line width=0.4pt] (5.786811480522852,0.929017672111554)-- (6.311945266374608,0.9856854837097546);
	\small
	\draw (0.35,1.67) node[anchor=north west] {$A$};
	\draw (4.3,1.75) node[anchor=north west] {$A$};
	\draw (1.9,1.82) node[anchor=north west] {$B$};
	\draw (5.8,1.7688124512870351) node[anchor=north west] {$B$};
	\draw (4.964625888569908,3.0413937736001158) node[anchor=north west] {$C$};
	\normalsize
	\draw [line width=0.4pt] (1.1903889753849768,1.2576045696740883)-- (1.7507157590559494,1.377248967498395);
	\draw [line width=0.4pt] (5.127777340148501,1.2576045696740883)-- (5.688104123819474,1.377248967498395);
	\draw (-0.1,1.76) node[anchor=north west] {$c$};
	\draw (3.8,1.76) node[anchor=north west] {$c'$};
	\begin{scriptsize}
	\draw [fill=black] (0.6445315826999247,1.6964695113948716) circle (0.5pt);
	\draw [fill=black] (2.00322258244179,1.6964695113948716) circle (0.5pt);
	\draw [fill=black] (1.1903889753849768,1.2576045696740883) circle (0.5pt);
	\draw [fill=black] (2.2979448314727335,1.8773675685835884) circle (0.5pt);
	\draw [fill=black] (1.9586628065743275,2.216649593481997) circle (0.5pt);
	\draw [fill=black] (0.48114431104901,2.457430385345384) circle (0.5pt);
	\draw [fill=black] (0.15709644872051595,0.7463966880611413) circle (0.5pt);
	\draw [fill=black] (4.581919947463449,1.6964695113948716) circle (0.5pt);
	\draw [fill=black] (5.135384571310579,2.608469175390208) circle (0.5pt);
	\draw [fill=black] (5.940610947205315,1.6964695113948716) circle (0.5pt);
	\draw [fill=black] (4.581919947463449,1.6964695113948716) circle (0.5pt);
	\draw [fill=black] (5.135384571310579,2.608469175390208) circle (0.5pt);
	\draw [fill=black] (5.135384571310579,2.608469175390208) circle (0.5pt);
	\draw [fill=black] (5.940610947205315,1.6964695113948716) circle (0.5pt);
	\draw [fill=black] (4.581919947463449,1.6964695113948716) circle (0.5pt);
	\draw [fill=black] (5.940610947205315,1.6964695113948716) circle (0.5pt);
	\draw [fill=black] (4.581919947463449,1.6964695113948716) circle (0.5pt);
	\draw [fill=black] (5.127777340148501,1.2576045696740883) circle (0.5pt);
	\draw [fill=black] (5.940610947205315,1.6964695113948716) circle (0.5pt);
	\draw [fill=black] (6.235333196236258,1.8773675685835884) circle (0.5pt);
	\draw [fill=black] (6.235333196236258,1.8773675685835884) circle (0.5pt);
	\draw [fill=black] (5.896051171337852,2.216649593481997) circle (0.5pt);
	\draw [fill=black] (4.418532675812535,2.457430385345384) circle (0.5pt);
	\draw [fill=black] (4.094484813484041,0.7463966880611413) circle (0.5pt);
	\draw [fill=black] (4.418532675812535,2.457430385345384) circle (0.5pt);
	\draw [fill=black] (5.896051171337852,2.216649593481997) circle (0.5pt);
	\draw [fill=black] (5.127777340148501,1.2576045696740883) circle (0.5pt);
	\draw [fill=black] (2.3745569016110837,0.9856854837097546) circle (0.5pt);
	\draw [fill=black] (1.7507157590559494,1.377248967498395) circle (0.5pt);
	\draw [fill=black] (4.094484813484041,0.7463966880611413) circle (0.5pt);
	\draw [fill=black] (6.311945266374608,0.9856854837097546) circle (0.5pt);
	\draw [fill=black] (5.127777340148501,1.2576045696740883) circle (0.5pt);
	\draw [fill=black] (5.688104123819474,1.377248967498395) circle (0.5pt);
	\end{scriptsize}
	\end{tikzpicture}
	\caption{A triangle move between rail arcs $c,c'$ in $\mathbb{R}^3$.}
	\label{figure_triangle_moves}
\end{figure}
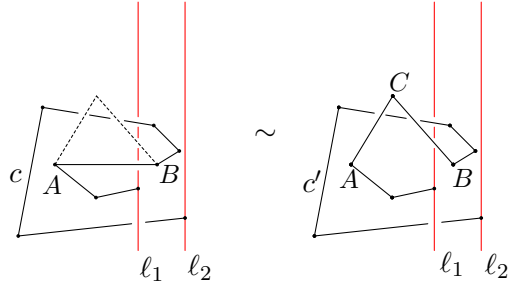

Rail isotopy between p.l. rail arcs can be effected via a finite sequence of triangle moves in space as  on Figure \ref{figure_triangle_moves}: a rail arc is modified so that it either replaces an edge $AB$ by two new edges $AC,CB$, or vice versa, where the triangle $ABC$ of the move does not intersect the arc or the rails at any other point. We also allow modifications via moves which slide the initial or final point of the arc on its rail, which means we allow replacement of an edge $MX$ with an edge $MX'$, where $X,X'$ lie on the same rail and the triangle $MXX'$ has no other common points with the rail other than the segment $XX'$. Here we think of the arcs and maps as piecewise linear  but due to the usual p.l. approximation theorems for the analogous smooth objects, our results hold in the smooth category as well.

It is natural to expect that rail isotopy can be described by some kind of equivalence among modifications of the projections of the rail arcs on a fixed plane. To this end, we choose to project the rail arcs on the plane $\pi$ defined by the rails $\ell_1,\ell_2$, assuming without any loss of generality that the arcs are in a generic position with respect to $\pi$. For any such projection $c_{pr}$ of some rail arc $c$, we keep track of the over/under data at double points of the projection with itself and with the rails. In this way we get what we call a rail knotoid diagram $c_{pr}$ on $\pi$, whose endpoints are on the rails (the leg on $\ell_1$ and the head on $\ell_2$). In general, we call as a planar rail knotoid diagram or just rail knotoid diagram, any arc on $\pi$ with endpoints on the rails and in general position with respect to them, with additional over/under data on its intersections with itself and the rails, except for its endpoints. 

We call two rail knotoid diagrams on $\pi$ as rail equivalent whenever one can be obtained from the other via a finite sequence of the usual Reidemester moves $\Omega_1, \ \Omega_2, \ \Omega_3$, along with the slide moves and planar isotopy moves or just planar isotopies of $\pi$, all defined locally as in Figure \ref{figure_rail_moves} with the provisions explained in the caption and where the moves involving the rails are given in separate subfigures for more clarity. We call these moves as rail knotoid moves.

Equivalence between rail knotoid diagrams as defined, is an equivalence relation in the set of all planar rail knotoid diagrams. We call the equivalence classes simply as planar rail knotoids or just rail knotoids. If we wish we can consider rail knotoids as oriented, with their orientation being the leg to head orientation in any one of their representatives.

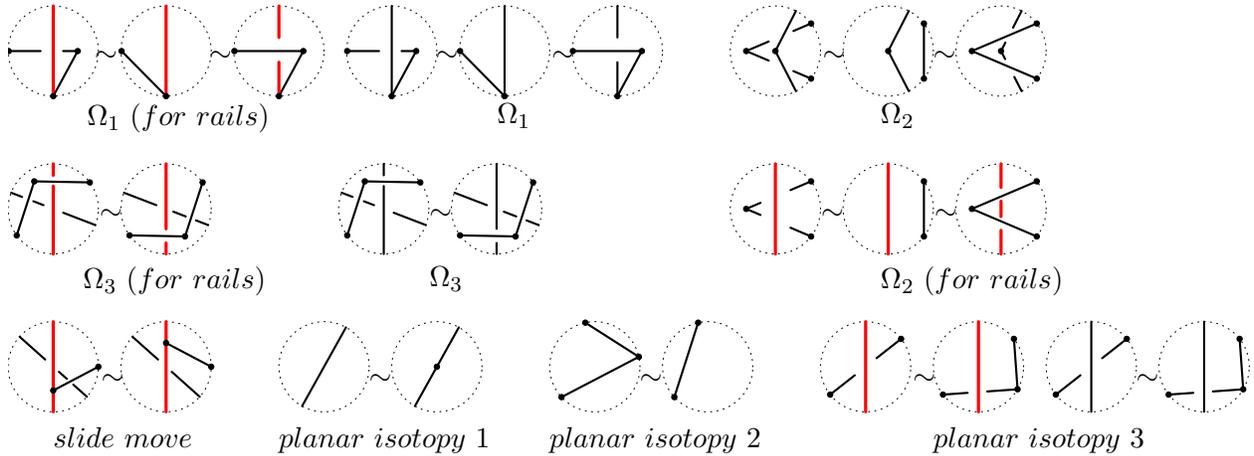
\begin{figure}[!h]
	\centering 
	\definecolor{ffqqqq}{rgb}{1.,0.,0.}
	\begin{tikzpicture}[line cap=round,line join=round,>=triangle 45,x=1.0cm,y=1.0cm]
	\clip(0.635,0.5040534152619682) rectangle (17.23932194149101,6.928883480699878);
	\draw [line width=0.4pt,dotted] (1.235839491929495,6.063072159460898) circle (0.6cm);
	\draw [line width=0.4pt,dotted] (1.235839491929495,3.9630721594608986) circle (0.6cm);
	\draw [line width=0.4pt,dotted] (1.235839491929495,1.8630721594608994) circle (0.6cm);
	\draw [line width=0.4pt,dotted] (2.7358394919294957,6.063072159460898) circle (0.6cm);
	\draw [line width=0.4pt,dotted] (4.235839491929497,6.063072159460898) circle (0.6cm);
	\draw [line width=0.4pt,dotted] (5.7358394919294975,6.063072159460898) circle (0.6cm);
	\draw [line width=0.4pt,dotted] (7.235839491929498,6.063072159460898) circle (0.6cm);
	\draw [line width=0.4pt,dotted] (8.7358394919295,6.063072159460898) circle (0.6cm);
	\draw [line width=0.4pt,dotted] (10.835839491929496,6.063072159460898) circle (0.6cm);
	\draw [line width=0.4pt,dotted] (12.335839491929496,6.063072159460898) circle (0.6cm);
	\draw [line width=0.4pt,dotted] (13.835839491929496,6.063072159460898) circle (0.6cm);
	\draw [line width=0.4pt,dotted] (2.7358394919294957,3.9630721594608986) circle (0.6cm);
	\draw [line width=0.4pt,dotted] (10.835839491929496,3.9630721594608986) circle (0.6cm);
	\draw [line width=0.4pt,dotted] (12.335839491929496,3.9630721594608986) circle (0.6cm);
	\draw [line width=0.4pt,dotted] (13.835839491929496,3.9630721594608986) circle (0.6cm);
	\draw [line width=0.4pt,dotted] (4.835839491929492,1.8630721594608994) circle (0.6cm);
	\draw [line width=0.4pt,dotted] (6.335839491929493,1.8630721594608994) circle (0.6cm);
	\draw [line width=0.4pt,dotted] (8.435839491929489,1.8630721594608994) circle (0.6cm);
	\draw [line width=0.4pt,dotted] (9.935839491929489,1.8630721594608994) circle (0.6cm);
	\draw [line width=0.4pt,dotted] (2.7358394919294957,1.8630721594608994) circle (0.6cm);
	\draw [line width=0.4pt,dotted] (12.035839491929485,1.8630721594608994) circle (0.6cm);
	\draw [line width=0.4pt,dotted] (13.535839491929485,1.8630721594608994) circle (0.6cm);
	\draw [line width=0.4pt,dotted] (15.035839491929485,1.8630721594608994) circle (0.6cm);
	\draw [line width=0.4pt,dotted] (16.535839491929487,1.8630721594608994) circle (0.6cm);
	\draw [line width=0.8pt] (0.6439933044901619,6.063072159460898)-- (1.067415882689213,6.063072159460898);
	\draw [line width=0.8pt] (1.366772156734188,6.063072159460898)-- (1.560931311568441,6.063072159460898);
	\draw [line width=0.8pt] (1.560931311568441,6.063072159460898)-- (1.235839491929495,5.463072159460898);
	\draw [line width=0.8pt] (2.1439933044901625,6.063072159460898)-- (2.7358394919294957,5.463072159460898);
	\draw [line width=0.8pt] (4.235839491929497,5.463072159460898)-- (4.560931311568442,6.063072159460898);
	\draw [line width=0.8pt] (4.560931311568442,6.063072159460898)-- (3.643993304490163,6.063072159460898);
	\draw [line width=0.8pt] (5.7358394919294975,6.6630721594608975)-- (5.7358394919294975,5.463072159460898);
	\draw [line width=0.8pt] (5.866772156734191,6.063072159460898)-- (6.060931311568443,6.063072159460898);
	\draw [line width=0.8pt] (6.060931311568443,6.063072159460898)-- (5.7358394919294975,5.463072159460898);
	\draw [line width=0.8pt] (5.143993304490164,6.063072159460898)-- (5.5674158826892155,6.063072159460898);
	\draw [line width=0.8pt] (6.643993304490165,6.063072159460898)-- (7.235839491929498,5.463072159460898);
	\draw [line width=0.8pt] (7.235839491929498,5.463072159460898)-- (7.235839491929498,6.6630721594608975);
	\draw [line width=0.8pt] (8.143993304490165,6.063072159460898)-- (9.060931311568446,6.063072159460898);
	\draw [line width=0.8pt] (9.060931311568446,6.063072159460898)-- (8.735839491929498,5.463072159460898);
	\draw [line width=1.2pt,color=ffqqqq] (4.235839491929497,5.463072159460898)-- (4.235839491929497,5.914513925353407);
	\draw [line width=1.2pt,color=ffqqqq] (4.235839491929497,6.243480228695809)-- (4.235839491929497,6.6630721594608975);
	\draw [line width=0.8pt] (8.735839491929498,5.463072159460898)-- (8.7358394919295,5.914513925353407);
	\draw [line width=0.8pt] (8.7358394919295,6.243480228695809)-- (8.735839491929498,6.6630721594608975);
	\draw [line width=1.2pt,color=ffqqqq] (1.235839491929495,5.463072159460898)-- (1.235839491929495,6.6630721594608975);
	\draw [line width=1.2pt,color=ffqqqq] (2.7358394919294957,6.6630721594608975)-- (2.7358394919294957,5.463072159460898);
	\draw [line width=0.8pt] (10.448350504394675,6.063072159460898)-- (10.74749215042923,6.19028015325822);
	\draw [line width=0.8pt] (11.07529611426485,6.3296766069152355)-- (11.310725728385822,6.429791483228201);
	\draw [line width=0.8pt] (10.74749215042923,5.935864165663578)-- (10.448350504394675,6.063072159460898);
	\draw [line width=0.8pt] (11.07529611426485,5.796467712006563)-- (11.310725728385822,5.696352835693597);
	\draw [line width=0.8pt] (11.109705105210379,5.529220470447661)-- (10.835839491929496,6.063072159460898);
	\draw [line width=0.8pt] (10.835839491929496,6.063072159460898)-- (11.109705105210379,6.5969238484741375);
	\draw [line width=0.8pt] (12.810725728385822,6.429791483228201)-- (12.810725728385822,5.696352835693597);
	\draw [line width=0.8pt] (12.335839491929496,6.063072159460898)-- (12.609705105210379,6.5969238484741375);
	\draw [line width=0.8pt] (12.609705105210379,5.529220470447661)-- (12.335839491929496,6.063072159460898);
	\draw [line width=0.8pt] (13.448350504394675,6.063072159460898)-- (14.310725728385822,6.429791483228201);
	\draw [line width=0.8pt] (14.310725728385822,5.696352835693597)-- (13.448350504394675,6.063072159460898);
	\draw [line width=0.8pt] (13.89400633627269,6.176457946472197)-- (13.835839491929496,6.063072159460898);
	\draw [line width=0.8pt] (13.835839491929496,6.063072159460898)-- (13.891007007193274,5.9555330243011095);
	\draw [line width=0.8pt] (13.995647196124736,6.374588496907969)-- (14.109705105210379,6.5969238484741375);
	\draw [line width=0.8pt] (14.012257251406721,5.719177510765978)-- (14.109705105210379,5.529220470447661);
	\draw [line width=0.8pt] (10.448350504394675,3.9630721594608986)-- (10.638966474141755,4.044130331797278);
	\draw [line width=0.8pt] (11.036220360031368,4.213059902527696)-- (11.310725728385822,4.3297914832282025);
	\draw [line width=0.8pt] (10.448350504394675,3.9630721594608986)-- (10.638966474141755,3.8820139871245196);
	\draw [line width=0.8pt] (11.036220360031368,3.7130844163941017)-- (11.310725728385822,3.596352835693598);
	\draw [line width=0.8pt] (12.810725728385822,4.3297914832282025)-- (12.810725728385822,3.596352835693598);
	\draw [line width=0.8pt] (13.448350504394675,3.9630721594608986)-- (14.310725728385822,4.3297914832282025);
	\draw [line width=0.8pt] (13.448350504394675,3.9630721594608986)-- (14.310725728385822,3.596352835693598);
	\draw [line width=1.2pt,color=ffqqqq] (10.835839491929496,4.563072159460898)-- (10.835839491929496,3.3630721594608994);
	\draw [line width=1.2pt,color=ffqqqq] (12.335839491929496,4.563072159460898)-- (12.335839491929496,3.3630721594608994);
	\draw [line width=1.2pt,color=ffqqqq] (13.835839491929496,4.563072159460898)-- (13.835839491929496,4.2481223117461155);
	\draw [line width=1.2pt,color=ffqqqq] (13.835839491929496,4.065555936199785)-- (13.835839491929496,3.860588382722012);
	\draw [line width=1.2pt,color=ffqqqq] (13.835839491929496,3.3630721594608994)-- (13.835839491929496,3.6780220071756817);
	\draw [line width=0.8pt] (0.983640087392172,4.3293591330133285)-- (0.7502996002155788,3.610579587121511);
	\draw [line width=0.8pt] (0.983640087392172,4.3293591330133285)-- (1.7213793836434113,4.315564731800286);
	\draw [line width=0.8pt] (0.6757984339441674,4.1783719815014075)-- (0.8219479169670243,4.122186890716585);
	\draw [line width=0.8pt] (0.9709944835317432,4.0648880573550255)-- (1.1068854795988603,4.012646696024471);
	\draw [line width=0.8pt] (1.367712812830891,3.9123753362242257)-- (1.7958805499148227,3.7477723374203897);
	\draw [line width=1.2pt,color=ffqqqq] (1.235839491929495,3.3630721594608977)-- (1.235839491929495,4.2670216486901325);
	\draw [line width=1.2pt,color=ffqqqq] (1.235839491929495,4.409624190671764)-- (1.235839491929495,4.5630721594609);
	\draw [line width=0.8pt] (2.9880388964668185,3.5967851859084687)-- (2.2502996002155795,3.610579587121511);
	\draw [line width=0.8pt] (2.9880388964668185,3.5967851859084687)-- (3.221379383643412,4.315564731800286);
	\draw [line width=0.8pt] (3.1320689064953533,3.810747394647889)-- (3.2958805499148234,3.7477723374203897);
	\draw [line width=1.2pt,color=ffqqqq] (2.7358394919294957,4.5630721594609)-- (2.7358394919294957,3.6971285763375916);
	\draw [line width=1.2pt,color=ffqqqq] (2.7358394919294957,3.5100790864908467)-- (2.7358394919294957,3.3630721594608977);
	\draw [line width=0.8pt] (2.606885479598861,4.012646696024471)-- (2.175798433944168,4.1783719815014075);
	\draw [line width=0.8pt] (2.83197520110087,3.926114152934032)-- (2.997368963082957,3.8625308727198044);
	\draw [line width=0.8pt] (1.235839491929495,1.5469862693104748)-- (1.8358394919294954,1.8630721594608994);
	\draw [line width=0.8pt] (3.3358394919294967,1.8630721594608994)-- (2.7358394919294957,2.179158049611324);
	\draw [line width=0.8pt] (0.7882672906370293,2.262670863951062)-- (1.128909514193796,1.9585407351475874);
	\draw [line width=0.8pt] (1.416366234123973,1.7018953658655254)-- (1.3108212870879676,1.7961273668237594);
	\draw [line width=0.8pt] (1.5020987260540666,1.6253522011894144)-- (1.6834116932219607,1.4634734549707367);
	\draw [line width=0.8pt] (2.28826729063703,2.262670863951062)-- (2.6289095141937966,1.9585407351475874);
	\draw [line width=0.8pt] (2.8108212870879683,1.7961273668237594)-- (3.183411693221961,1.4634734549707367);
	\draw [line width=1.2pt,color=ffqqqq] (1.235839491929495,2.4630721594609)-- (1.235839491929495,1.2630721594608991);
	\draw [line width=1.2pt,color=ffqqqq] (2.7358394919294957,2.4630721594609)-- (2.7358394919294957,1.2630721594608991);
	\draw [line width=0.8pt] (6.627852526972036,2.3872175470827335)-- (6.04382645688695,1.3389267718390654);
	\draw [line width=0.8pt] (5.127852526972035,2.3872175470827335)-- (4.543826456886949,1.3389267718390654);
	\draw [line width=0.8pt] (7.992515329705296,1.458765732121168)-- (9.021189864965,1.994848260432155);
	\draw [line width=0.8pt] (9.021189864965,1.994848260432155)-- (8.309346935799486,2.449586978748271);
	\draw [line width=0.8pt] (9.809346935799486,2.449586978748271)-- (9.492515329705295,1.458765732121168);
	\draw [line width=1.2pt,color=ffqqqq] (12.035839491929485,2.463072159460899)-- (12.035839491929485,1.2630721594608998);
	\draw [line width=0.8pt] (11.564153843997596,1.4922451788687718)-- (11.891901608530231,1.7499119415746973);
	\draw [line width=0.8pt] (12.179777375328738,1.9762323773471016)-- (12.507525139861373,2.233899140053027);
	\draw [line width=0.8pt] (14.055519126223329,1.5631837173691507)-- (14.007525139861373,2.233899140053027);
	\draw [line width=1.2pt,color=ffqqqq] (13.535839491929485,2.463072159460899)-- (13.535839491929485,1.2630721594608998);
	\draw [line width=0.8pt] (13.064153843997596,1.4922451788687718)-- (13.392372759042894,1.515731345430122);
	\draw [line width=0.8pt] (13.651717468756951,1.5342891212618675)-- (14.055519126223329,1.5631837173691507);
	\draw [line width=0.8pt] (15.035839491929485,2.463072159460899)-- (15.035839491929485,1.2630721594608998);
	\draw [line width=0.8pt] (14.564153843997596,1.4922451788687718)-- (14.891901608530231,1.7499119415746973);
	\draw [line width=0.8pt] (15.179777375328738,1.9762323773471016)-- (15.507525139861373,2.233899140053027);
	\draw [line width=0.8pt] (16.064153843997598,1.4922451788687718)-- (16.392372759042896,1.515731345430122);
	\draw [line width=0.8pt] (16.535839491929487,2.463072159460899)-- (16.535839491929487,1.2630721594608998);
	\draw [line width=0.8pt] (16.651717468756953,1.5342891212618675)-- (17.05551912622333,1.5631837173691507);
	\draw [line width=0.8pt] (17.05551912622333,1.5631837173691507)-- (17.007525139861375,2.233899140053027);
	\draw [line width=0.4pt,dotted] (5.629631975570058,3.9630721594608986) circle (0.6cm);
	\draw [line width=0.8pt] (5.144092083856142,3.610579587121511)-- (5.377432571032736,4.3293591330133285);
	\draw [line width=0.8pt] (5.377432571032736,4.3293591330133285)-- (6.1151718672839745,4.315564731800286);
	\draw [line width=0.8pt] (5.629631975570058,3.3630721594608977)-- (5.629631975570058,4.2670216486901325);
	\draw [line width=0.8pt] (5.629631975570058,4.409624190671764)-- (5.629631975570058,4.5630721594609);
	\draw [line width=0.8pt] (5.069590917584732,4.1783719815014075)-- (5.215740400607588,4.122186890716585);
	\draw [line width=0.8pt] (5.364786967172307,4.0648880573550255)-- (5.500677963239424,4.012646696024471);
	\draw [line width=0.8pt] (5.761505296471455,3.9123753362242257)-- (6.189673033555387,3.7477723374203897);
	\draw [line width=0.4pt,dotted] (7.12963197557006,3.9630721594608986) circle (0.6cm);
	\draw [line width=0.8pt] (6.644092083856144,3.610579587121511)-- (7.381831380107382,3.5967851859084687);
	\draw [line width=0.8pt] (7.381831380107382,3.5967851859084687)-- (7.615171867283976,4.315564731800286);
	\draw [line width=0.8pt] (6.569590917584733,4.1783719815014075)-- (7.000677963239426,4.012646696024471);
	\draw [line width=0.8pt] (7.12963197557006,4.5630721594609)-- (7.12963197557006,3.6971285763375916);
	\draw [line width=0.8pt] (7.12963197557006,3.3630721594608977)-- (7.12963197557006,3.5100790864908467);
	\draw [line width=0.8pt] (7.2257676847414345,3.926114152934032)-- (7.3911614467235225,3.8625308727198044);
	\draw [line width=0.8pt] (7.525861390135917,3.810747394647889)-- (7.689673033555387,3.7477723374203897);

	\draw (1.652,6.2) node[anchor=north west] {$\sim$};
	\draw (3.173,6.2) node[anchor=north west] {$\sim$};
	\draw (6.185,6.2) node[anchor=north west] {$\sim$};
	\draw (7.73,6.2) node[anchor=north west] {$\sim$};
	\draw (11.33,6.2) node[anchor=north west] {$\sim$};
	\draw (12.8,6.2) node[anchor=north west] {$\sim$};
	\draw (1.72,4.1) node[anchor=north west] {$\sim$};
	\draw (6.1,4.1) node[anchor=north west] {$\sim$};
	\draw (11.31,4.1) node[anchor=north west] {$\sim$};
	\draw (12.82,4.1) node[anchor=north west] {$\sim$};
	\draw (1.742,1.9) node[anchor=north west] {$\sim$};
	\draw (5.3,1.9) node[anchor=north west] {$\sim$};
	\draw (8.9,1.9) node[anchor=north west] {$\sim$};
	\draw (12.53,1.9) node[anchor=north west] {$\sim$};
	\draw (15.5,1.9) node[anchor=north west] {$\sim$};
	\draw (7,5.5) node[anchor=north west] {$\Omega_1$};
	\draw (1.55,5.5) node[anchor=north west] {$\Omega_1\ (for \ rails)$};
	\draw (12.1,5.5) node[anchor=north west] {$\Omega_2$};
	\draw (6.1,3.33) node[anchor=north west] {$\Omega_3$};
	\draw (12.1,3.33) node[anchor=north west] {$\Omega_2 \ (for \ rails)$};
	\draw (1.1,1.2) node[anchor=north west] {$slide \ move$};
	\draw (4.1,1.2) node[anchor=north west] {$planar \ isotopy \ 1$};
	\draw (7.7,1.2) node[anchor=north west] {$planar \ isotopy \ 2$};
	\draw (12.8,1.2) node[anchor=north west] {$planar \ isotopy \ 3$};
	\draw (1.5,3.33) node[anchor=north west] {$\Omega_3\ (for \ rails)$};

	\begin{scriptsize}
	\draw [fill=black] (10.835839491929496,6.063072159460898) circle (1.0pt);
	\draw [fill=black] (12.335839491929496,6.063072159460898) circle (1.0pt);
	\draw [fill=black] (13.835839491929496,6.063072159460898) circle (1.0pt);
	\draw [fill=black] (-11.364160508070507,3.9630721594608986) circle (1.0pt);
	\draw [fill=black] (6.335839491929493,1.8630721594608994) circle (1.0pt);
	\draw [fill=black] (1.235839491929495,5.463072159460898) circle (1.0pt);
	\draw [fill=black] (0.6439933044901619,6.063072159460898) circle (1.0pt);
	\draw [fill=black] (1.560931311568441,6.063072159460898) circle (1.0pt);
	\draw [fill=black] (2.1439933044901625,6.063072159460898) circle (1.0pt);
	\draw [fill=black] (2.7358394919294957,5.463072159460898) circle (1.0pt);
	\draw [fill=black] (3.643993304490163,6.063072159460898) circle (1.0pt);
	\draw [fill=black] (4.560931311568442,6.063072159460898) circle (1.0pt);
	\draw [fill=black] (5.143993304490164,6.063072159460898) circle (1.0pt);
	\draw [fill=black] (6.060931311568443,6.063072159460898) circle (1.0pt);
	\draw [fill=black] (4.235839491929497,5.463072159460898) circle (1.0pt);
	\draw [fill=black] (5.7358394919294975,5.463072159460898) circle (1.0pt);
	\draw [fill=black] (7.235839491929498,5.463072159460898) circle (1.0pt);
	\draw [fill=black] (8.735839491929498,5.463072159460898) circle (1.0pt);
	\draw [fill=black] (6.643993304490165,6.063072159460898) circle (1.0pt);
	\draw [fill=black] (8.143993304490165,6.063072159460898) circle (1.0pt);
	\draw [fill=black] (9.060931311568446,6.063072159460898) circle (1.0pt);
	\draw [fill=black] (10.448350504394675,6.063072159460898) circle (1.0pt);
	\draw [fill=black] (11.310725728385822,5.696352835693597) circle (1.0pt);
	\draw [fill=black] (11.310725728385822,6.429791483228201) circle (1.0pt);
	\draw [fill=black] (13.448350504394675,6.063072159460898) circle (1.0pt);
	\draw [fill=black] (12.810725728385822,6.429791483228201) circle (1.0pt);
	\draw [fill=black] (12.810725728385822,5.696352835693597) circle (1.0pt);
	\draw [fill=black] (14.310725728385822,6.429791483228201) circle (1.0pt);
	\draw [fill=black] (14.310725728385822,5.696352835693597) circle (1.0pt);
	\draw [fill=black] (10.448350504394675,3.9630721594608986) circle (1.0pt);
	\draw [fill=black] (11.310725728385822,4.3297914832282025) circle (1.0pt);
	\draw [fill=black] (11.310725728385822,3.596352835693598) circle (1.0pt);
	\draw [fill=black] (12.810725728385822,4.3297914832282025) circle (1.0pt);
	\draw [fill=black] (12.810725728385822,3.596352835693598) circle (1.0pt);
	\draw [fill=black] (14.310725728385822,4.3297914832282025) circle (1.0pt);
	\draw [fill=black] (14.310725728385822,3.596352835693598) circle (1.0pt);
	\draw [fill=black] (13.448350504394675,3.9630721594608986) circle (1.0pt);
	\draw [fill=black] (0.983640087392172,4.3293591330133285) circle (1.0pt);
	\draw [fill=black] (0.7502996002155788,3.610579587121511) circle (1.0pt);
	\draw [fill=black] (1.7213793836434113,4.315564731800286) circle (1.0pt);
	\draw [fill=black] (2.2502996002155795,3.610579587121511) circle (1.0pt);
	\draw [fill=black] (3.221379383643412,4.315564731800286) circle (1.0pt);
	\draw [fill=black] (2.9880388964668185,3.5967851859084687) circle (1.0pt);
	\draw [fill=black] (3.3358394919294967,1.8630721594608994) circle (1.0pt);
	\draw [fill=black] (1.235839491929495,1.5469862693104748) circle (1.0pt);
	\draw [fill=black] (1.8358394919294954,1.8630721594608994) circle (1.0pt);
	\draw [fill=black] (2.7358394919294957,2.179158049611324) circle (1.0pt);
	\draw [fill=black] (7.992515329705296,1.458765732121168) circle (1.0pt);
	\draw [fill=black] (8.309346935799486,2.449586978748271) circle (1.0pt);
	\draw [fill=black] (9.021189864965,1.994848260432155) circle (1.0pt);
	\draw [fill=black] (9.492515329705295,1.458765732121168) circle (1.0pt);
	\draw [fill=black] (9.809346935799486,2.449586978748271) circle (1.0pt);
	\draw [fill=black] (11.564153843997596,1.4922451788687718) circle (1.0pt);
	\draw [fill=black] (12.507525139861373,2.233899140053027) circle (1.0pt);
	\draw [fill=black] (13.064153843997596,1.4922451788687718) circle (1.0pt);
	\draw [fill=black] (14.055519126223329,1.5631837173691507) circle (1.0pt);
	\draw [fill=black] (14.007525139861373,2.233899140053027) circle (1.0pt);
	\draw [fill=black] (14.564153843997596,1.4922451788687718) circle (1.0pt);
	\draw [fill=black] (15.507525139861373,2.233899140053027) circle (1.0pt);
	\draw [fill=black] (16.064153843997598,1.4922451788687718) circle (1.0pt);
	\draw [fill=black] (17.05551912622333,1.5631837173691507) circle (1.0pt);
	\draw [fill=black] (17.007525139861375,2.233899140053027) circle (1.0pt);
	\draw [fill=black] (5.377432571032736,4.3293591330133285) circle (1.0pt);
	\draw [fill=black] (6.1151718672839745,4.315564731800286) circle (1.0pt);
	\draw [fill=black] (6.644092083856144,3.610579587121511) circle (1.0pt);
	\draw [fill=black] (7.381831380107382,3.5967851859084687) circle (1.0pt);
	\draw [fill=black] (7.615171867283976,4.315564731800286) circle (1.0pt);
	\draw [fill=black] (5.144092083856142,3.610579587121511) circle (1.0pt);
	\end{scriptsize}
	\end{tikzpicture}

	\caption{Red lines denote rails and dots denote vertices. For clarity, the moves involving rails were given in separate figures. The moving part in $\Omega_3$ moves can be on top, on bottom or in the middle, and similarly for the rail. The moving part and the rail in slide moves can be on top or on bottom, and the moving part can be on the left or the right of the rail. The moving part in planar isotopy 3 moves can be on top or bottom and the disappearing vertex on the left or right of the fixed part.}
\label{figure_rail_moves}
\end{figure}

 In \cite{KL} we proved that our aforementioned expectation about rail isotopy indeed holds:

\begin{theorem*} 
	Two rail arcs in $\mathbb{R}^3$ are rail isotopic iff their rail knotoid diagram projections on the plane $\pi$ of the rails are rail equivalent. In other words, rail isotopy in $\mathbb{R}^3$ corresponds to rail equivalence on $\pi$ (rail arcs are isotopic iff they correspond to the same rail knotoid).
\end{theorem*}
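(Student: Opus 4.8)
The plan is to prove the two implications separately, treating the planar rail knotoid moves as the generators of rail equivalence and the spatial triangle moves (together with the endpoint-sliding moves) as the generators of rail isotopy, the latter being available by the p.l.\ description recalled just above. Since both relations are defined as the transitive closures of their respective elementary moves, and a composition of moves on one side will be matched by a composition on the other, it suffices in each direction to treat a single elementary move.

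For the direction ``diagrams rail equivalent $\Rightarrow$ arcs rail isotopic'', I would exhibit, for each generating rail knotoid move, a rail isotopy of $\mathbb{R}^3$ carrying one arc onto the other. Each ordinary Reidemeister move $\Omega_1,\Omega_2,\Omega_3$ is realized by a local isotopy supported in a small ball lying over the relevant disk of $\pi$ and disjoint from the rails; each planar isotopy is realized by extending a planar isotopy of $\pi$ trivially in the transverse direction, which keeps $\pi$, and hence each rail, invariant throughout. For the rail variants and the slide moves the supporting region meets a rail, and I would realize them by isotopies that push a strand (lying slightly above or below $\pi$) across the rail, or that slide an endpoint along its rail, in every case keeping both rails setwise invariant. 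Because the over/under data recorded in the diagram corresponds to the transverse height in these local models, the resulting spatial moves are genuine rail isotopies realizing the prescribed diagram moves.

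For the converse ``arcs rail isotopic $\Rightarrow$ diagrams rail equivalent'', by the p.l.\ fact recalled above it is enough to treat a single triangle move, replacing an edge $AB$ by $AC\cup CB$ with the triangle $ABC$ meeting neither the arc nor the rails except along $AB$, or a single endpoint slide move. Projecting everything to $\pi$, the key step is the classical one: subdivide the triangle $ABC$ into finitely many small triangles so that, after a small generic perturbation preserving the rail structure, the projection of each small triangle interacts with the projection of the remaining diagram and with the rails in a single elementary way. After subdivision each elementary small-triangle move falls into one of four types, each inducing exactly one listed rail knotoid move: (i) the small triangle projects into a region disjoint from all other strands and from the rails, giving a planar isotopy; (ii) its projection sweeps across a single transverse self-crossing of the remaining arc, giving $\Omega_1$, $\Omega_2$ or $\Omega_3$ according to the number of strands and the over/under data; (iii) its projection sweeps across a single crossing with a rail, giving the corresponding rail variant of $\Omega_1$, $\Omega_2$ or $\Omega_3$; (iv) the moving vertex is an endpoint sliding along a rail, giving a slide move. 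Composing these elementary diagram changes recovers the effect of the original triangle (or slide) move as a finite sequence of rail knotoid moves.

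The main obstacle is the case analysis of types (iii) and (iv): the rails, being fixed lines lying in the very plane onto which we project, introduce interaction types that are absent from the classical Reidemeister theorem, and I must check that the subdivision can always be arranged so that each small triangle meets a rail in the projection in exactly the standard local pattern of a rail move. In particular one must ensure that a small triangle never needs to cross a rail and a strand simultaneously, and that the slide moves genuinely account for all endpoint motion while keeping the rails invariant. Proving that a sufficiently fine subdivision satisfying all these genericity conditions exists, and that the generic position of the projection can be restored by planar isotopies between consecutive elementary steps, is the technical heart of the argument; everything else is a direct adaptation of the proof of Reidemeister's theorem to the rail setting.
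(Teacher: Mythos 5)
The first thing to note is that the paper you were given does not actually prove this statement: it is quoted in \S 1 as a result established in \cite{KL}, so there is no in-paper proof to match your argument against line by line. That said, your outline is the natural and, in spirit, the intended route --- the rail analogue of Reidemeister's theorem, proved move-by-move in both directions using the p.l.\ generators recalled in \S 1 (triangle moves and endpoint-slide moves in space; $\Omega_1,\Omega_2,\Omega_3$, their rail variants, slide moves and planar isotopies on $\pi$). Your setup is sound: reducing each implication to a single elementary move is legitimate since both relations are transitive closures of their generators, the realization of each diagram move by a local rail isotopy is unproblematic, and your four-type classification of small triangles correctly isolates where the new content lies, namely the interactions with the rails. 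You also correctly flag the one feature special to this setting that the classical proof does not face: the projection plane $\pi$ \emph{contains} the rails, so rail crossings are crossings with fixed lines of the diagram plane itself, and the subdivision must be fine enough that each small triangle produces exactly one rail-type local pattern (and never a simultaneous rail-and-strand event). The honest caveat is that, as written, this is a proof plan rather than a proof: the existence of a sufficiently fine subdivision satisfying all the genericity conditions, the treatment of degenerate projections (e.g.\ a triangle of the move projecting to a segment of $\pi$), and the bookkeeping restoring generic position between consecutive elementary steps are acknowledged but not carried out --- and that is precisely the technical bulk of the argument in \cite{KL}. So the verdict is: correct approach, matching the cited proof's strategy, with the hard verification deferred rather than done.
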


\section{Defining knots corresponding to rail knotoids}
Let $\kappa$ be a rail knotoid. We are going to define and correspond to $\kappa$ two  unoriented knots as well as their oriented versions. We start with some representative rail  knotoid diagram $K$ for $\kappa$ and we first define $K$'s over and under companion loops as follows:

\begin{definition} \rm Let $K$ be a rail knotoid diagram.  We call as  \textit{over companion loop} $K_o$ of $K$, respectively \textit{under companion loop} $K_u$ of $K$, any loop in space whose knot projection on the plane  $\pi$ of the rails is described as follows (see Figure \ref{figure_companion_loops}):

\begin{itemize}
	\item Knot projection of $K_o$: (1) a line segment with its endpoints on the rails and vertical to them, put high up enough so that it does not mess with $K$, (2) one segment on each rail joining the corresponding endpoint of the vertical segment with the leg or the head respectively, (3) the rail knotoid diagram $K$ with the over/under data of $K$ at the crossing points with the segments projecting on the rails. 
	
	\item Knot projection of $K_u$: as for $K_o$, only now we place the vertical segment in (1) low down enough so that it does not mess with $K$.
			
	\end{itemize}

We denote the oriented versions of the chosen  loops $K_o,K_u$ as $K_{o+},K_{o-},K_{u+},K_{u-}$ where the $+$ sign indicates the orientation induced by the rail knotoid and the $-$ sign indicates the opposite. We call these as \textit{oriented} over or under companion loops of $K$ respectively.

\end{definition}

\begin{figure}[!h]
	\centering
	\definecolor{ffqqqq}{rgb}{1.,0.,0.}
	\begin{tikzpicture}[line cap=round,line join=round,>=triangle 45,x=1.0cm,y=1.0cm]
	\clip(3.6398049769069836,4.276636051040948) rectangle (16.2,8.200067153741754);
\draw [shift={(6.165968007865688,6.320138130917781)},line width=0.4pt]  plot[domain=-0.9090662575001538:0.9923514156973319,variable=\t]({1.*0.24226291935302283*cos(\t r)+0.*0.24226291935302283*sin(\t r)},{0.*0.24226291935302283*cos(\t r)+1.*0.24226291935302283*sin(\t r)});
\draw [shift={(5.219811882128152,4.820081265772829)},line width=0.4pt]  plot[domain=1.2981046296861078:1.718995016247648,variable=\t]({1.*2.023928926136264*cos(\t r)+0.*2.023928926136264*sin(\t r)},{0.*2.023928926136264*cos(\t r)+1.*2.023928926136264*sin(\t r)});
\draw [shift={(6.204868053391834,6.34658325187964)},line width=0.4pt]  plot[domain=1.8031194837987232:3.0144356198078617,variable=\t]({1.*0.48491668043597475*cos(\t r)+0.*0.48491668043597475*sin(\t r)},{0.*0.48491668043597475*cos(\t r)+1.*0.48491668043597475*sin(\t r)});
\draw [shift={(6.309704444107993,6.62558684206024)},line width=0.4pt]  plot[domain=-0.8893986781627756:0.8893986781627703,variable=\t]({1.*0.21663861102863105*cos(\t r)+0.*0.21663861102863105*sin(\t r)},{0.*0.21663861102863105*cos(\t r)+1.*0.21663861102863105*sin(\t r)});
\draw [shift={(6.02638382590094,6.382174114055326)},line width=0.4pt]  plot[domain=3.0561739652779294:4.962142713419938,variable=\t]({1.*0.3036244470810311*cos(\t r)+0.*0.3036244470810311*sin(\t r)},{0.*0.3036244470810311*cos(\t r)+1.*0.3036244470810311*sin(\t r)});
\draw [shift={(5.913020909225199,6.165646600533312)},line width=0.4pt]  plot[domain=0.7476423546557244:1.4462476259445392,variable=\t]({1.*0.5255705655408514*cos(\t r)+0.*0.5255705655408514*sin(\t r)},{0.*0.5255705655408514*cos(\t r)+1.*0.5255705655408514*sin(\t r)});
\draw [shift={(6.234937330459185,6.3080246633316115)},line width=0.4pt]  plot[domain=1.1606768062067996:1.8416066999258576,variable=\t]({1.*0.5297547816122091*cos(\t r)+0.*0.5297547816122091*sin(\t r)},{0.*0.5297547816122091*cos(\t r)+1.*0.5297547816122091*sin(\t r)});
\draw [shift={(4.575785902501749,5.879313862501207)},line width=0.4pt]  plot[domain=5.043761529785609:6.22781696755429,variable=\t]({1.*0.5549087482043962*cos(\t r)+0.*0.5549087482043962*sin(\t r)},{0.*0.5549087482043962*cos(\t r)+1.*0.5549087482043962*sin(\t r)});
\draw [shift={(4.916366821999889,5.796893604784073)},line width=0.4pt]  plot[domain=0.23765657725350492:1.9992233526240046,variable=\t]({1.*0.2196513515551546*cos(\t r)+0.*0.2196513515551546*sin(\t r)},{0.*0.2196513515551546*cos(\t r)+1.*0.2196513515551546*sin(\t r)});
\draw [shift={(4.87084544220089,5.756343328057861)},line width=0.4pt]  plot[domain=2.6604632973133926:5.083867319629343,variable=\t]({1.*0.2120285880272482*cos(\t r)+0.*0.2120285880272482*sin(\t r)},{0.*0.2120285880272482*cos(\t r)+1.*0.2120285880272482*sin(\t r)});
\draw [shift={(5.337200378913295,6.745049224444948)},line width=0.4pt]  plot[domain=4.9134570762437075:5.832726076495774,variable=\t]({1.*1.1133137627604077*cos(\t r)+0.*1.1133137627604077*sin(\t r)},{0.*1.1133137627604077*cos(\t r)+1.*1.1133137627604077*sin(\t r)});
\draw [shift={(4.642083259921757,9.789377117550492)},line width=0.4pt]  plot[domain=4.827682869882762:4.930718586642295,variable=\t]({1.*4.235767135465101*cos(\t r)+0.*4.235767135465101*sin(\t r)},{0.*4.235767135465101*cos(\t r)+1.*4.235767135465101*sin(\t r)});
\draw [line width=1.2pt,color=ffqqqq] (4.756320587529386,5.521292638718075)-- (4.756320587529386,4.605041317096833);
\draw [shift={(5.146852170408313,5.4731930892554175)},line width=0.4pt]  plot[domain=1.9216994299710348:2.790394803642949,variable=\t]({1.*1.36311380979352*cos(\t r)+0.*1.36311380979352*sin(\t r)},{0.*1.36311380979352*cos(\t r)+1.*1.36311380979352*sin(\t r)});
\draw [line width=1.2pt,color=ffqqqq] (4.756320587529386,5.700383831150284)-- (4.756320587529386,8.10919846858383);
\draw [line width=1.2pt,color=ffqqqq] (3.866941057018287,8.10919846858383)-- (3.866941057018287,4.605041317096833);
\draw [shift={(10.933984115515889,6.320138130917781)},line width=0.4pt]  plot[domain=-0.9090662575001653:0.9923514156973472,variable=\t]({1.*0.24226291935302063*cos(\t r)+0.*0.24226291935302063*sin(\t r)},{0.*0.24226291935302063*cos(\t r)+1.*0.24226291935302063*sin(\t r)});
\draw [shift={(9.98782798977835,4.820081265772817)},line width=0.4pt]  plot[domain=1.2981046296861094:1.7189950162476466,variable=\t]({1.*2.023928926136275*cos(\t r)+0.*2.023928926136275*sin(\t r)},{0.*2.023928926136275*cos(\t r)+1.*2.023928926136275*sin(\t r)});
\draw [shift={(10.97288416104203,6.346583251879643)},line width=0.4pt]  plot[domain=1.8031194837987214:3.0144356198078683,variable=\t]({1.*0.48491668043597086*cos(\t r)+0.*0.48491668043597086*sin(\t r)},{0.*0.48491668043597086*cos(\t r)+1.*0.48491668043597086*sin(\t r)});
\draw [shift={(11.077720551758189,6.62558684206024)},line width=0.4pt]  plot[domain=-0.8893986781627659:0.8893986781627607,variable=\t]({1.*0.21663861102863272*cos(\t r)+0.*0.21663861102863272*sin(\t r)},{0.*0.21663861102863272*cos(\t r)+1.*0.21663861102863272*sin(\t r)});
\draw [shift={(10.794399933551137,6.382174114055323)},line width=0.4pt]  plot[domain=3.0561739652779205:4.96214271341994,variable=\t]({1.*0.3036244470810285*cos(\t r)+0.*0.3036244470810285*sin(\t r)},{0.*0.3036244470810285*cos(\t r)+1.*0.3036244470810285*sin(\t r)});
\draw [shift={(10.681037016875397,6.165646600533317)},line width=0.4pt]  plot[domain=0.7476423546557193:1.4462476259445411,variable=\t]({1.*0.5255705655408464*cos(\t r)+0.*0.5255705655408464*sin(\t r)},{0.*0.5255705655408464*cos(\t r)+1.*0.5255705655408464*sin(\t r)});
\draw [shift={(11.002953438109383,6.3080246633316115)},line width=0.4pt]  plot[domain=1.1606768062067996:1.8416066999258591,variable=\t]({1.*0.5297547816122093*cos(\t r)+0.*0.5297547816122093*sin(\t r)},{0.*0.5297547816122093*cos(\t r)+1.*0.5297547816122093*sin(\t r)});
\draw [shift={(9.34380201015194,5.879313862501209)},line width=0.4pt]  plot[domain=5.043761529785618:6.227816967554287,variable=\t]({1.*0.5549087482043998*cos(\t r)+0.*0.5549087482043998*sin(\t r)},{0.*0.5549087482043998*cos(\t r)+1.*0.5549087482043998*sin(\t r)});
\draw [shift={(9.684382929650086,5.796893604784073)},line width=0.4pt]  plot[domain=0.23765657725350492:1.999223352624001,variable=\t]({1.*0.2196513515551546*cos(\t r)+0.*0.2196513515551546*sin(\t r)},{0.*0.2196513515551546*cos(\t r)+1.*0.2196513515551546*sin(\t r)});
\draw [shift={(9.638861549851088,5.756343328057859)},line width=0.4pt]  plot[domain=2.660463297313385:5.083867319629341,variable=\t]({1.*0.21202858802724903*cos(\t r)+0.*0.21202858802724903*sin(\t r)},{0.*0.21202858802724903*cos(\t r)+1.*0.21202858802724903*sin(\t r)});
\draw [shift={(10.105216486563487,6.745049224444951)},line width=0.4pt]  plot[domain=4.913457076243713:5.832726076495773,variable=\t]({1.*1.1133137627604122*cos(\t r)+0.*1.1133137627604122*sin(\t r)},{0.*1.1133137627604122*cos(\t r)+1.*1.1133137627604122*sin(\t r)});
\draw [shift={(9.410099367571933,9.789377117550597)},line width=0.4pt]  plot[domain=4.827682869882764:4.930718586642295,variable=\t]({1.*4.235767135465208*cos(\t r)+0.*4.235767135465208*sin(\t r)},{0.*4.235767135465208*cos(\t r)+1.*4.235767135465208*sin(\t r)});
\draw [shift={(9.914868278058512,5.4731930892554175)},line width=0.4pt]  plot[domain=1.9216994299710355:2.790394803642949,variable=\t]({1.*1.3631138097935205*cos(\t r)+0.*1.3631138097935205*sin(\t r)},{0.*1.3631138097935205*cos(\t r)+1.*1.3631138097935205*sin(\t r)});
\draw [shift={(15.702000223166088,6.320138130917782)},line width=0.4pt]  plot[domain=-0.909066257500176:0.9923514156973493,variable=\t]({1.*0.24226291935302097*cos(\t r)+0.*0.24226291935302097*sin(\t r)},{0.*0.24226291935302097*cos(\t r)+1.*0.24226291935302097*sin(\t r)});
\draw [shift={(14.755844097428549,4.820081265772835)},line width=0.4pt]  plot[domain=1.2981046296861078:1.7189950162476488,variable=\t]({1.*2.023928926136257*cos(\t r)+0.*2.023928926136257*sin(\t r)},{0.*2.023928926136257*cos(\t r)+1.*2.023928926136257*sin(\t r)});
\draw [shift={(15.740900268692231,6.346583251879641)},line width=0.4pt]  plot[domain=1.8031194837987272:3.014435619807864,variable=\t]({1.*0.48491668043597436*cos(\t r)+0.*0.48491668043597436*sin(\t r)},{0.*0.48491668043597436*cos(\t r)+1.*0.48491668043597436*sin(\t r)});
\draw [shift={(15.845736659408384,6.62558684206024)},line width=0.4pt]  plot[domain=-0.8893986781627596:0.8893986781627543,variable=\t]({1.*0.21663861102863388*cos(\t r)+0.*0.21663861102863388*sin(\t r)},{0.*0.21663861102863388*cos(\t r)+1.*0.21663861102863388*sin(\t r)});
\draw [shift={(15.562416041201335,6.382174114055323)},line width=0.4pt]  plot[domain=3.0561739652779205:4.96214271341994,variable=\t]({1.*0.3036244470810285*cos(\t r)+0.*0.3036244470810285*sin(\t r)},{0.*0.3036244470810285*cos(\t r)+1.*0.3036244470810285*sin(\t r)});
\draw [shift={(15.449053124525589,6.165646600533309)},line width=0.4pt]  plot[domain=0.7476423546557235:1.4462476259445332,variable=\t]({1.*0.5255705655408558*cos(\t r)+0.*0.5255705655408558*sin(\t r)},{0.*0.5255705655408558*cos(\t r)+1.*0.5255705655408558*sin(\t r)});
\draw [shift={(15.770969545759582,6.308024663331606)},line width=0.4pt]  plot[domain=1.1606768062068067:1.8416066999258596,variable=\t]({1.*0.529754781612215*cos(\t r)+0.*0.529754781612215*sin(\t r)},{0.*0.529754781612215*cos(\t r)+1.*0.529754781612215*sin(\t r)});
\draw [shift={(14.111818117802141,5.879313862501209)},line width=0.4pt]  plot[domain=5.043761529785613:6.227816967554287,variable=\t]({1.*0.5549087482043987*cos(\t r)+0.*0.5549087482043987*sin(\t r)},{0.*0.5549087482043987*cos(\t r)+1.*0.5549087482043987*sin(\t r)});
\draw [shift={(14.452399037300284,5.796893604784074)},line width=0.4pt]  plot[domain=0.23765657725350098:1.9992233526240026,variable=\t]({1.*0.21965135155515436*cos(\t r)+0.*0.21965135155515436*sin(\t r)},{0.*0.21965135155515436*cos(\t r)+1.*0.21965135155515436*sin(\t r)});
\draw [shift={(14.406877657501285,5.756343328057859)},line width=0.4pt]  plot[domain=2.660463297313385:5.083867319629341,variable=\t]({1.*0.21202858802724903*cos(\t r)+0.*0.21202858802724903*sin(\t r)},{0.*0.21202858802724903*cos(\t r)+1.*0.21202858802724903*sin(\t r)});
\draw [shift={(14.87323259421369,6.74504922444495)},line width=0.4pt]  plot[domain=4.913457076243708:5.8327260764957725,variable=\t]({1.*1.1133137627604095*cos(\t r)+0.*1.1133137627604095*sin(\t r)},{0.*1.1133137627604095*cos(\t r)+1.*1.1133137627604095*sin(\t r)});
\draw [shift={(14.178115475222127,9.789377117550597)},line width=0.4pt]  plot[domain=4.827682869882764:4.930718586642296,variable=\t]({1.*4.235767135465208*cos(\t r)+0.*4.235767135465208*sin(\t r)},{0.*4.235767135465208*cos(\t r)+1.*4.235767135465208*sin(\t r)});
\draw [shift={(14.682884385708704,5.47319308925542)},line width=0.4pt]  plot[domain=1.9216994299710326:2.7903948036429496,variable=\t]({1.*1.3631138097935147*cos(\t r)+0.*1.3631138097935147*sin(\t r)},{0.*1.3631138097935147*cos(\t r)+1.*1.3631138097935147*sin(\t r)});
\draw [line width=0.4pt] (8.634957164668485,5.942135303467903)-- (8.634957164668485,7.543009720604965);
\draw [line width=0.4pt] (8.634957164668485,7.543009720604965)-- (9.524336695179583,7.543009720604965);
\draw [line width=0.4pt] (9.524336695179583,7.543009720604965)-- (9.524336695179583,5.700383831150284);
\draw [line width=0.4pt] (13.402973272318683,5.942135303467903)-- (13.402973272318683,4.95308318971556);
\draw [line width=0.4pt] (13.402973272318683,4.95308318971556)-- (14.29235280282978,4.95308318971556);
\draw [line width=0.4pt] (9.524336695179583,5.354593988343822)-- (9.524336695179583,5.521292638718075);
\draw [line width=0.4pt] (14.29235280282978,4.95308318971556)-- (14.29235280282978,5.354593988343822);
\draw [shift={(14.419229980704673,5.796383035836265)},line width=0.4pt]  plot[domain=1.853022153193996:2.859366827190717,variable=\t]({1.*0.20856101225148985*cos(\t r)+0.*0.20856101225148985*sin(\t r)},{0.*0.20856101225148985*cos(\t r)+1.*0.20856101225148985*sin(\t r)});
\draw [shift={(14.561745163801394,5.987534217235471)},line width=0.4pt]  plot[domain=1.695696026710191:1.9967488342716746,variable=\t]({1.*0.8408410087315366*cos(\t r)+0.*0.8408410087315366*sin(\t r)},{0.*0.8408410087315366*cos(\t r)+1.*0.8408410087315366*sin(\t r)});
\draw (3.8,4.947626643426834) node[anchor=north west] {$\ell_1$};
\draw (4.795399886015987,4.901030074511148) node[anchor=north west] {$\ell_2$};
\draw (5.280004202739118,7.21) node[anchor=north west] {$K$};
\draw (9.6,7.305413030560573) node[anchor=north west] {$K_o$};
\draw (14.6,7.305413030560573) node[anchor=north west] {$K_u$};
\begin{scriptsize}
\draw [fill=black] (4.756320587529386,5.354593988343822) circle (1.0pt);
\draw [fill=black] (3.866941057018287,5.942135303467903) circle (1.0pt);
\end{scriptsize}
\end{tikzpicture}
\caption{A rail knotoid diagram $K$ and a choice of over and under companion loops $K_o,K_u$.}
\label{figure_companion_loops}
\end{figure}
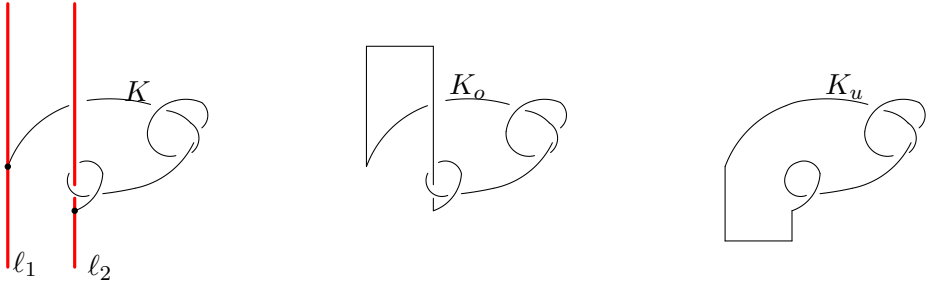

For a given rail knotoid diagram $K$, we have the freedom 
by the above definition to consider many distinct companion loops. Two choices $K_{o1},K_{o2}$ of $K_o$  differ only on their three line segments added to $K$ and by the definition the three segments of $K_{o1}$ can be isotoped on $\pi$ to coincide with those of $K_{o2}$ avoiding the diagram $K$. Similarly for two choices of $K_u$. Thus:

\begin{lemma}\label{lemma0}
 For a given rail knotoid diagram $K$, the companion loops  $K_o,K_u$ are uniquely defined up to isotopy in space.
\end{lemma}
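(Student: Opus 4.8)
The plan is to pin down exactly what freedom the definition leaves in the construction of a companion loop, and then to produce an explicit ambient isotopy of $\mathbb{R}^3$ relating any two choices. I treat $K_o$; the argument for $K_u$ is verbatim after interchanging the roles of ``high up'' and ``low down''. First I would record what the three ingredients (1)--(3) of the knot projection of $K_o$ have in common for two choices $K_{o1},K_{o2}$: ingredient (3) is literally $K$ together with its prescribed crossing data, while ingredients (1)--(2) meet $K$ only at the endpoints leg and head and at the finitely many points where $K$ crosses the rails. At each such rail-crossing the over/under information is dictated by $K$ itself, hence is the same for every admissible choice. Consequently $K_{o1}$ and $K_{o2}$ share the subdiagram $K$ exactly and differ only in the position of the added cap formed by the horizontal segment together with the two segments running along the rails.

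Next I would localize the difference. Since the horizontal segment must sit above every point of the compact diagram $K$, each rail segment runs along its rail from the fixed point (leg or head) and agrees with the other choice all the way up to the topmost point of $K$ (passing through the same rail-crossings, with the same data); only the portion lying strictly above $K$, and the height of the horizontal segment, are allowed to vary. The part of the plane $\pi$ lying above $K$ is simply connected and contains no point of $K$, so there is a planar isotopy of $\pi$, fixing $K$ and the points leg and head, that carries the cap of $K_{o1}$ onto the cap of $K_{o2}$: concretely one raises or lowers the horizontal segment and stretches or shrinks the rail portions above $K$ accordingly. Because this isotopy is confined to the region above $K$, it never sweeps the cap across $K$, so no crossing is created, destroyed, or switched, and in particular the prescribed over/under data at the rail-crossings is untouched.

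Finally I would upgrade this planar isotopy of diagrams to an isotopy of loops in space. As the motion of the cap alters no crossing and pushes no strand through another, it is realized by an ambient isotopy of $\mathbb{R}^3$ projecting to the planar isotopy above: the portion of the loop lying over $K$ is held fixed, and the portion lying over the cap is transported by the planar isotopy lifted off the plane with its height data preserved. This carries the embedded loop $K_{o1}$ to $K_{o2}$, and the same reasoning applies to $K_u$, proving that $K_o$ and $K_u$ are each well defined up to isotopy in space. I expect the only point requiring genuine care to be the bookkeeping at the rail-crossings: verifying that sliding the cap keeps it strictly above $K$, so that every crossing with the rails is preserved unchanged, rather than any deeper topological difficulty.
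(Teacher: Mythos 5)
Your argument is correct and is essentially the paper's own: the paper likewise observes that two choices of $K_o$ differ only in the three added segments, which can be isotoped in $\pi$ onto one another while avoiding $K$, and your proposal merely spells out the details (locating the difference in the cap above $K$, the planar isotopy fixing $K$ and the endpoints, and its lift to an ambient isotopy of $\mathbb{R}^3$) that the paper leaves implicit.
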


Even stronger, it holds that the isotopy classes of the companion loops $K_{o},K_{u}$ remain invariant for equivalent rail knotoid diagrams:

\begin{lemma}\label{lemma}
	If $K_1,K_2$ are equivalent rail knotoid diagrams, then any choices of $(K_1)_{o},(K_2)_{o}$ are isotopic in space and similarly for $(K_1)_{u},(K_2)_{u}$.
\end{lemma}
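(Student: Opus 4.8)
The plan is to reduce the statement to the case of a single rail knotoid move and then to verify, move by move, that the companion loop changes only by an isotopy of $\mathbb{R}^3$. Since $K_1$ and $K_2$ are rail equivalent, they are joined by a finite sequence of rail knotoid moves $K_1=D_0,D_1,\dots,D_n=K_2$, each $D_{i-1}\to D_i$ being one of $\Omega_1,\Omega_2,\Omega_3$ (ordinary or for rails), a slide move, or a planar isotopy of Figure \ref{figure_rail_moves}. By Lemma \ref{lemma0} the loop $(D_i)_o$ is determined up to isotopy by $D_i$ alone, so by transitivity of isotopy it suffices to prove $(D_{i-1})_o\simeq (D_i)_o$ for a single move, and likewise for the under loops. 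Each move is supported in a small disk $\Delta\subset\pi$; using the freedom granted by Lemma \ref{lemma0} I would choose companion loops for $D_{i-1}$ and $D_i$ whose three added segments coincide outside $\Delta$ and whose vertical segment is pushed high enough to lie above $\Delta$, so that the two companion diagrams differ only inside $\Delta$.

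First I would dispose of the moves not involving a rail. Here the modification inside $\Delta$ takes place on the arc part of $D_{i-1}$, which is a literal sub-arc of the knot diagram of $(D_{i-1})_o$; the moves $\Omega_1,\Omega_2,\Omega_3$ and planar isotopies $1$ and $2$ are then exactly the classical Reidemeister moves and planar isotopies performed on the knot diagram $(D_{i-1})_o$, so Reidemeister's theorem gives $(D_{i-1})_o\simeq (D_i)_o$. The heart of the argument is the moves that do involve a rail, namely $\Omega_1,\Omega_2,\Omega_3$ for rails, the slide move, and planar isotopy $3$. The key observation is that, once the companion segments are drawn, the arc of $D_{i-1}$ and the relevant rail both appear as strands of the single knot diagram $(D_{i-1})_o$: along the stretch joining the vertical segment to the leg or head the rail is exactly the rail segment of the companion loop, carrying precisely the over/under data recorded by $D_{i-1}$. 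Consequently a rail $\Omega_2$ (resp.\ rail $\Omega_3$) becomes an ordinary $\Omega_2$ (resp.\ $\Omega_3$) between the arc strand and the rail-segment strand of $(D_{i-1})_o$, and a rail $\Omega_1$ becomes a kink removed by an ordinary $\Omega_1$; in every case Reidemeister's theorem yields the required isotopy.

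The step I expect to be the main obstacle is the slide move together with planar isotopy $3$, since these are the only moves that displace an endpoint of the diagram. When the leg (or head) slides along its rail past a crossing of the arc with that rail, the junction between the arc and the rail segment slides along the companion loop, and the crossing it passes is converted into a self-crossing kink that is undone by an $\Omega_1$ move; when no crossing is passed, the slide is a plain planar isotopy after closing up. The delicate bookkeeping is to confirm, for each of the variants listed in the caption of Figure \ref{figure_rail_moves} (rail and strand on top or bottom, strand left or right of the rail, and each sense of the crossing), that the over/under data really do turn the local picture into a valid Reidemeister move with no spurious crossings created, and to treat cleanly the borderline case in which the sliding endpoint crosses the rail below the leg or head, where the rail segment is absent and the move reduces instead to a planar isotopy of the arc part of the loop. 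Granting these verifications, chaining the isotopies along $D_0,\dots,D_n$ proves $(K_1)_o\simeq (K_2)_o$; since $K_u$ is built by the identical recipe with the auxiliary vertical segment placed low rather than high, the same argument verbatim proves $(K_1)_u\simeq (K_2)_u$.
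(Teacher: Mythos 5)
Your overall architecture is exactly the paper's: reduce to a single rail knotoid move using Lemma \ref{lemma0} (so that $(K_2)_o$ may be taken to be $(K_1)_o$ altered by the move), observe that moves away from the rails are literally classical Reidemeister moves or planar isotopies performed on the companion knot diagram, and then check case by case that the moves involving a rail translate into classical moves once the relevant rail segment is viewed as a strand of the companion loop. Your treatment of the rail versions of $\Omega_1$, $\Omega_2$, $\Omega_3$, and of slides that meet no crossing, is correct and matches what the paper summarizes with ``as one can easily check.''

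There is, however, a concrete error in your analysis of the slide move past a crossing. When the leg slides along $\ell_1$ past a point $P$ where a strand of the diagram crosses the rail, the terminal edge of the arc must simultaneously sweep past that strand, so \emph{two} crossings of the over companion diagram change at once: the crossing of the strand with the rail segment of the loop at $P$, and the crossing of the strand with the terminal edge. In the position where the leg lies below $P$, the strand crosses both arms of the corner at the leg, consistently over both or consistently under both (this consistency is forced by the requirement that the spatial slide triangle avoid the arc, and is what the caption of Figure \ref{figure_rail_moves} encodes); in the position where the leg lies above $P$, it crosses neither. The two companion diagrams therefore differ by an $\Omega_2$ move across a bigon, not by removal of a self-crossing kink: a single $\Omega_1$ changes the crossing number by one, while here it changes by two, so the mechanism you propose cannot work as stated. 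This is exactly what the paper's Figure \ref{figure_slide_invariant} depicts: in $(K_2)_o$ the transversal strand passes under both the slanted terminal edge and the vertical rail segment, and in $(K_1)_o$ under neither. The $\Omega_1$-kink mechanism you describe is the correct one for the rail $\Omega_1$ move (Figure \ref{figure_omega_1_behaviour}), which you already handle separately; note also that for the \emph{under} companion the same slide instead trades the single crossing on the terminal edge for a single crossing on the rail segment --- a corner passing across a strand that it consistently over- or under-crosses --- again realized by planar moves and $\Omega_2$, never by a lone $\Omega_1$. With that case repaired, your argument coincides with the paper's proof.
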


\begin{proof}
We will show the  result for two diagrams which differ by a single rail knotoid move. Then the result will follow for all equivalent rail knotoid diagrams as all of them follow from a given one by a finite sequence of rail knotoid moves. We treat the case of the  over companion loops $(K_1)_{o},(K_2)_{o}$, as the case for $(K_1)_{u},(K_2)_{u}$ is treated similarly.

So let the given rail knotoid  diagrams $K_1,K_2$ differ by a single rail knotoid move.  We make some choice of $(K_1)_{o}$  and then we choose $(K_2)_o$  to be the diagram of $(K_1)_{o}$ altered by the rail knotoid move which transforms $K_1$ to $K_2$. By Lemma \ref{lemma0} our choices do not alter the isotopy classes of the two companion loops.

Now, the  rail knotoid move involves either none of the rails or just one of them.  In the first case, the move is a usual Reidemeister move or a planar isotopy move between the knot diagrams   $(K_1)_o$ and $(K_2)_o$ on $\pi$, thus the knots in space which these diagrams represent are isotopic. In the second case, $(K_1)_o$ and $(K_2)_o$ are related to each other   
via the usual Reidemeister moves and planar isotopy moves of knot diagrams as one can easily check:  for example, Figure \ref{figure_slide_invariant} deals with the case of a slide move, and Figure \ref{figure_omega_1_behaviour} deals with an $\Omega_1$ rail knotoid move involving $\ell_1$. Thus $(K_1)_{o},(K_2)_{o}$ are isotopic knots in space as wanted in this case as well. 
\end{proof}

So to each rail knotoid $\kappa$ there correspond $2$ isotopy classes of unoriented  knots, namely the isotopy classes of some choice of the companion loops  $K_o,K_u$. Orienting these knots, there correspond to $\kappa$  four isotopy classes of oriented knots.

\begin{definition} \rm
We call as \textit{over} and \textit{under companion knots} of a rail knotoid $\kappa$,  the isotopy classes  $\kappa_o,\kappa_u$ of a choice of the unoriented loops $K_o,K_u$ where $K$ is any rail knotoid diagram representing $\kappa$. We call the loops $K_o,K_u$  as a choice of companion loops of $\kappa$.

 We denote the oriented loops $K_o,K_u$ as $K_{o+},K_{o-},K_{u+},K_{u-}$ and their corresponding isotopy classes as $\kappa_{o+},\kappa_{o-},\kappa_{u+},\kappa_{o-}$, where the $+$ sign indicates the orientation induced by the rail knotoid $\kappa$. We call $K_{o+},K_{o-},K_{u+},K_{u-}$ as a choice of oriented companion loops of $\kappa$ and $\kappa_{o+},\kappa_{o-},\kappa_{u+},\kappa_{o-}$ as oriented companion  knots of $\kappa$.
 \end{definition}

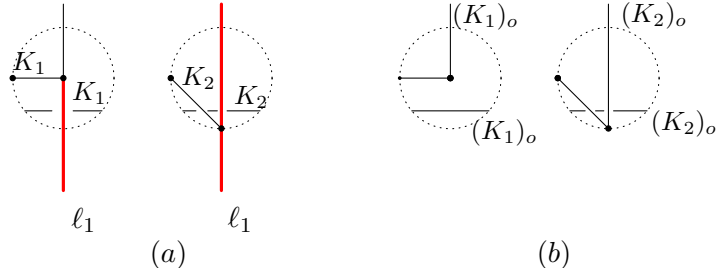
\begin{figure}[!h]
	\centering
\definecolor{ffqqqq}{rgb}{1.,0.,0.}
\begin{tikzpicture}[line cap=round,line join=round,>=triangle 45,x=1.0cm,y=1.0cm]
\clip(0,2.4) rectangle (10.0,6.3);
\draw [line width=1.2pt,color=ffqqqq] (3.356512409852039,5.910589215319042)-- (3.356512409852039,3.4302026165564565);
\draw [line width=0.4pt] (0.5819289219473257,4.926101426790625)-- (1.2554026305918522,4.926101426790625);
\draw [line width=1.2pt,color=ffqqqq] (1.2554026305918522,4.926101426790625)-- (1.2554026305918522,3.4302026165564565);
\draw [line width=0.4pt] (2.6830387012075123,4.926101426790625)-- (3.356512409852039,4.2560095527823165);
\draw [line width=0.4pt] (7.828158022624398,4.926101426790625)-- (8.501631731268924,4.2560095527823165);
\small
\draw (0.45,5.4) node[anchor=north west] {$K_1$};
\draw (1.25,5.0) node[anchor=north west] {$K_1$};
\draw (2.7,5.2) node[anchor=north west] {$K_2$};
\draw (3.4,4.95) node[anchor=north west] {$K_2$};
\draw (6.3,6.05) node[anchor=north west] {$(K_1)_o$};
\draw (6.55,4.5) node[anchor=north west] {$(K_1)_o$};
\draw (8.541883154242406,6.054036111807506) node[anchor=north west] {$(K_2)_o$};
\draw (8.941749096466857,4.623404629626681) node[anchor=north west] {$(K_2)_o$};
\normalsize
\draw (1.2287793664485267,3.3349477046812184) node[anchor=north west] {$\ell_1$};
\draw (3.3080822660156803,3.3438336145084286) node[anchor=north west] {$\ell_1$};
\draw (2.2684308162321036,2.9) node[anchor=north west] {$(a)$};
\draw (7.413372606186727,2.9) node[anchor=north west] {$(b)$};
\draw [line width=0.4pt] (1.2554026305918522,5.910589215319042)-- (1.2554026305918522,4.926101426790625);
\draw [line width=0.4pt] (6.400521952008738,4.926101426790625)-- (6.400521952008738,5.910589215319042);
\draw [line width=0.4pt] (8.501631731268924,4.2560095527823165)-- (8.501631731268924,5.910589215319042);
\draw [line width=0.4pt,dotted] (1.2554026305918522,4.926101426790625) circle (0.6734737086445265cm);
\draw [line width=0.4pt,dotted] (3.356512409852039,4.926101426790625) circle (0.6734737086445266cm);
\draw [line width=0.4pt,dotted] (6.400521952008738,4.926101426790625) circle (0.6734737086445265cm);
\draw [line width=0.4pt,dotted] (8.501631731268922,4.926101426790625) circle (0.6734737086445248cm);
\draw [line width=0.4pt] (2.8429276147012885,4.490443884189358)-- (3.018005344436278,4.490443884189358);
\draw [line width=0.4pt] (3.175574543485869,4.490443884189358)-- (3.302218956275326,4.490443884189358);
\draw [line width=0.4pt] (3.4315449191127763,4.490443884189358)-- (3.8700972050027893,4.490443884189358);
\draw [line width=0.4pt] (5.886937156857993,4.490443884189358)-- (6.914106747159483,4.490443884189358);
\draw [line width=0.4pt] (0.7418178354411022,4.490443884189358)-- (1.1032114251129292,4.490443884189358);
\draw [line width=0.4pt] (1.385283071780357,4.490443884189358)-- (1.7689874257426035,4.490443884189358);
\draw [line width=0.4pt] (5.727048243364212,4.926101426790625)-- (6.400521952008738,4.926101426790625);
\draw [line width=0.4pt] (7.988046936118164,4.490443884189358)-- (8.192414305762304,4.4904438841893555);
\draw [line width=0.4pt] (8.33031947960124,4.4904438841893555)-- (8.440643618672388,4.4904438841893555);
\draw [line width=0.4pt] (8.569355114255394,4.4904438841893555)-- (9.015216526419682,4.490443884189358);
\begin{scriptsize}
\draw [fill=black] (-6.4895721150954095,4.294551717384737) circle (1.0pt);
\draw [fill=black] (1.2554026305918522,4.926101426790625) circle (1.0pt);
\draw [fill=black] (3.356512409852039,4.2560095527823165) circle (1.0pt);
\draw [fill=black] (0.5819289219473257,4.926101426790625) circle (1.0pt);
\draw [fill=black] (2.6830387012075123,4.926101426790625) circle (1.0pt);
\draw [fill=black] (7.828158022624398,4.926101426790625) circle (1.0pt);
\draw [fill=black] (6.400521952008738,4.926101426790625) circle (1.0pt);
\draw [fill=black] (6.400521952008738,4.926101426790625) circle (1.0pt);
\draw [fill=black] (6.400521952008738,4.926101426790625) circle (1.0pt);
\draw [fill=black] (7.828158022624398,4.926101426790625) circle (1.0pt);
\draw [fill=black] (8.501631731268924,4.2560095527823165) circle (1.0pt);
\draw [fill=black] (5.727048243364212,4.926101426790625) circle (0.5pt);
\end{scriptsize}
\end{tikzpicture}
	\caption{(a) Rail knotoid diagrams $K_1,K_2$ connected via a slide move involving rail $\ell_1$. (b) The corresponding over companion loops $(K_1)_{o},(K_2)_{o}$ .}
	\label{figure_slide_invariant}
\end{figure}

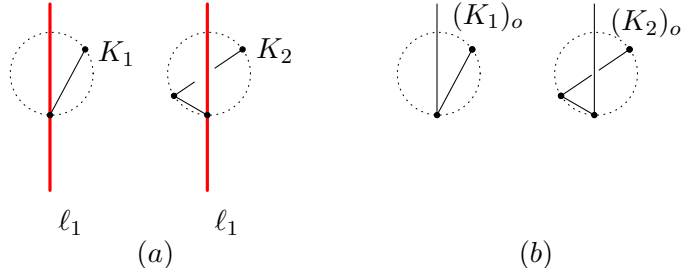
\begin{figure}[!h]
	\centering
\definecolor{ffqqqq}{rgb}{1.,0.,0.}
\begin{tikzpicture}[line cap=round,line join=round,>=triangle 45,x=1.0cm,y=1.0cm]
\clip(0.6526375565961707,2.4) rectangle (9.7,6.0);
\draw [line width=1.2pt,color=ffqqqq] (1.2554026305918522,5.910589215319042)-- (1.2554026305918522,3.4302026165564565);
\draw (1.7565764675305908,5.5474344577555215) node[anchor=north west] {$K_1$};
\draw (3.8615447299055443,5.584856115753299) node[anchor=north west] {$K_2$};
\draw (6.462349960751042,6.071337669724406) node[anchor=north west] {$(K_1)_o$};
\draw (8.567318223125996,5.987138939229407) node[anchor=north west] {$(K_2)_o$};
\draw (1.2326732555617135,3.3) node[anchor=north west] {$\ell_1$};
\draw (3.3095752744383344,3.3) node[anchor=north west] {$\ell_1$};
\draw (2.2711242650000236,2.9) node[anchor=north west] {$(a)$};
\draw (7.3511143381982444,2.9) node[anchor=north west] {$(b)$};
\draw [line width=0.4pt] (1.2554026305918522,4.431971119785855)-- (1.7237121717045791,5.3061351934155265);
\draw [line width=1.2pt,color=ffqqqq] (3.347290063127479,5.910589215319042)-- (3.347290063127479,3.4302026165564565);
\draw [line width=0.4pt] (3.347290063127479,4.431971119785855)-- (2.9044163803237035,4.68828042681962);
\draw [line width=0.4pt] (2.9044163803237035,4.68828042681962)-- (3.174117167331428,4.871159031729778);
\draw [line width=0.4pt] (3.448279832560726,5.057063145291555)-- (3.815599604240206,5.3061351934155265);
\draw [line width=0.4pt] (6.400521952008738,4.431971119785855)-- (6.868831493121465,5.3061351934155265);
\draw [line width=0.4pt] (6.400521952008738,4.431971119785855)-- (6.400521952008738,5.910589215319042);
\draw [line width=0.4pt] (8.554928231202023,5.030976796916886)-- (8.96071892565709,5.3061351934155265);
\draw [line width=0.4pt] (8.492409384544365,5.910589215319042)-- (8.492409384544365,4.431971119785855);
\draw [line width=0.4pt,dotted] (6.423384292134568,4.982247308728974) circle (0.5507509153090827cm);
\draw [line width=0.4pt,dotted] (8.515271724670193,4.982247308728974) circle (0.5507509153090827cm);
\draw [line width=0.4pt,dotted] (3.3701524032533077,4.982247308728974) circle (0.5507509153090829cm);
\draw [line width=0.4pt,dotted] (1.278264970717681,4.982247308728974) circle (0.5507509153090819cm);
\draw [line width=0.4pt] (8.049535701740588,4.68828042681962)-- (8.492409384544365,4.431971119785855);
\draw [line width=0.4pt] (8.049535701740588,4.68828042681962)-- (8.453738375903715,4.962362019135443);
\begin{scriptsize}
\draw [fill=black] (1.2554026305918522,4.431971119785855) circle (1.0pt);
\draw [fill=black] (1.7237121717045791,5.3061351934155265) circle (1.0pt);
\draw [fill=black] (3.347290063127479,4.431971119785855) circle (1.0pt);
\draw [fill=black] (3.815599604240206,5.3061351934155265) circle (1.0pt);
\draw [fill=black] (2.9044163803237035,4.68828042681962) circle (1.0pt);
\draw [fill=black] (6.868831493121465,5.3061351934155265) circle (1.0pt);
\draw [fill=black] (8.96071892565709,5.3061351934155265) circle (1.0pt);
\draw [fill=black] (6.400521952008738,4.431971119785855) circle (1.0pt);
\draw [fill=black] (6.868831493121465,5.3061351934155265) circle (1.0pt);
\draw [fill=black] (8.492409384544365,4.431971119785855) circle (1.0pt);
\draw [fill=black] (8.049535701740588,4.68828042681962) circle (1.0pt);
\draw [fill=black] (8.049535701740588,4.68828042681962) circle (1.0pt);
\draw [fill=black] (8.96071892565709,5.3061351934155265) circle (1.0pt);
\end{scriptsize}
\end{tikzpicture}
	\caption{(a) Rail knotoid diagrams $K_1,K_2$ connected via an $\Omega_1$ move involving rail $\ell_1$. (b) The corresponding  over companion loops $(K_1)_{o},(K_2)_{o}$ .}
	\label{figure_omega_1_behaviour}
\end{figure}

\section{Invariants for rail knotoids}

In this section we define invariants for rail knotoids regarding the equivalence relation of rail knotoid diagrams, as well as their regular equivalence relation which we define below following the corresponding definition of regural isotopy for knot diagrams.

\begin{definition}
	\rm We call two rail knotoid diagrams on the plane $\pi$ as \textit{regularly equivalent} whenever one can be obtained from the other via a finite sequence of all rail moves except for $\Omega_1$ moves, that is, by the Reidemester moves $ \Omega_2, \ \Omega_3$, along with  slide moves and planar isotopy moves on $\pi$.
\end{definition}

The regular equivalence relation is clearly an equivalence relation on the set $\mathcal{RKD}$ of rail knotoid diagrams and any two regularly  equivalent rail diagrams are rail equivalent as well. Now we define as expected:

\begin{definition}
	\rm An \textit{equivalence invariant} or just an \textit{invariant} of rail knotoid diagrams is a map $f:\mathcal{RKD}\rightarrow A$ of the rail knotoid diagrams to some set $A$, so that equivalent rail knotoid diagrams correspond to the same element of $A$. 
	
	There is no harm considering these maps as maps $f:\mathcal{RK}\rightarrow A$ defined on the set $\mathcal{RK}$ of rail knotoids instead of the set of their diagrams, as $f(\kappa)=f(K)$ where $K$ is any rail knotoid diagram of the rail knotoid $\kappa$. We then call $f$ as an \textit{equivalence invariant} or just an \textit{invariant} of rail knotoids. 
	
	\noindent Similarly,  a \textit{regular equivalence invariant} or just a \textit{regular invariant} of rail knotoid diagrams is a map $\mathcal{RKD}\rightarrow A$ of the rail knotoid diagrams to some set $A$, so that regularly equivalent rail knotoid diagrams correspond to the same element of $A$.	By abuse of language we can say that $f$ is a regular invariant of rail knotoids as well.
\end{definition}

Under the above definition, Lemma \ref{lemma} translates to:

\begin{prop}
	The isotopy classes of the companion loops of the rail knotoid diagrams, are invariants of the rail knotoids.
\end{prop}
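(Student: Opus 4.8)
The plan is to exhibit the assignment $K \mapsto [K_o]$, sending a rail knotoid diagram to the isotopy class of an over companion loop, as a concrete map $f_o : \mathcal{RKD} \to A$, where $A$ denotes the set of isotopy classes of knots in space, and then to check that it satisfies the two requirements in the definition of an invariant: that it is well defined as a function on diagrams, and that it is constant on rail-equivalence classes. The very same argument, run verbatim, handles the under companion loops and gives $f_u$, and running it for the oriented versions yields the four oriented companion-knot invariants.

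First I would address well-definedness as a function on $\mathcal{RKD}$. For a fixed diagram $K$ the construction of $K_o$ involves a choice, namely the three added segments, so a priori $f_o(K)$ is not a single element of $A$. This is exactly the ambiguity that Lemma \ref{lemma0} removes: any two choices of $K_o$ are isotopic in space and hence determine one and the same isotopy class in $A$. Thus the value $f_o(K) := [K_o]$ is unambiguous, and $f_o$ is a genuine map $\mathcal{RKD} \to A$. Next I would establish invariance under rail equivalence: by Lemma \ref{lemma}, if $K_1$ and $K_2$ are rail equivalent then any choices of $(K_1)_o$ and $(K_2)_o$ are isotopic in space, i.e.\ $f_o(K_1) = f_o(K_2)$. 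This is precisely the defining condition that equivalent rail knotoid diagrams be sent to the same element of $A$, so $f_o$ is an invariant of rail knotoid diagrams, and by the definition it descends to a map on $\mathcal{RK}$ via $f_o(\kappa) := f_o(K)$ for any representative $K$ of $\kappa$.

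Since there is no step beyond quoting Lemma \ref{lemma0} and Lemma \ref{lemma} and matching them against the definition of an invariant, the only thing that requires care is keeping the two roles cleanly separated: Lemma \ref{lemma0} handles the choice of companion loop for a fixed diagram, which is what secures well-definedness of $f_o$ as a function on $\mathcal{RKD}$, while Lemma \ref{lemma} handles the choice of diagram within a single rail knotoid, which is what secures invariance. I expect no genuine obstacle here; the mathematical content of the proposition is entirely carried by the two preceding lemmas, and the statement amounts to a formal repackaging of them in the language of the invariant definition.
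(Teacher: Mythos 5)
Your proposal is correct and follows exactly the paper's route: the paper presents the Proposition as the immediate translation of Lemma \ref{lemma} into the language of the invariant definition, with Lemma \ref{lemma0} having already secured well-definedness of the companion loops for a fixed diagram. Your only addition is to spell out explicitly the two roles (choice of loop versus choice of diagram), which the paper leaves implicit but clearly intends.
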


An immediate consequence of this Proposition is the first part of the Theorem that follows. The second part of the Theorem is an immediate consequence of the arguments in the proof of Lemma \ref{lemma}, which reveal that two regularly equivalent rail knotoid diagrams have regularly equivalent companion loops.

\begin{theorem}
	Every ambient isotopy invariant $f:\mathcal{K}\rightarrow A$ of unoriented knots provides $2$ invariants $f_o,f_u:\mathcal{RK}\rightarrow A$ of rail knotoids via the corresponding unoriented companion loops as $f_{o}(\kappa)=f(\kappa_{o}), f_{u}(\kappa)=f(\kappa_{u})$ and every ambient isotopy invariant $f$ of oriented knots $f:\mathcal{K}^{or}\rightarrow A$ provides $4$ invariants $f_{o-},f_{o+},f_{u-},f_{u-}:\mathcal{RK}\rightarrow A$ of rail knotoids via the corresponding oriented companion knots, as $f_{o\pm}(\kappa)=f(\kappa_{o\pm}),f_{u\pm}(\kappa)=f(\kappa_{u\pm})$.
	
	Similarly, every regular isotopy invariant of unoriented knot diagrams $f:\mathcal{KD}\rightarrow A$ provides $2$ regular equivalence invariants $f:\mathcal{RKD}\rightarrow A$ of rail knotoid diagrams via the corresponding unoriented companion loops as $f_{o}(K)=f(K_{o}), f_{u}(K)=f(K_{u})$, and every regular isotopy invariant $f:\mathcal{KD}^{or}\rightarrow A$ of oriented knot diagrams provides $4$ regular equivalence invariants of rail knotoid diagrams via the corresponding oriented companion loops as $f_{o\pm}(K)=f(K_{o\pm}),f_{u\pm}(K)=f(K_{u\pm})$.
	
\end{theorem}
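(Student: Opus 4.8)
The plan is to deduce the entire statement from two facts already in hand: the preceding Proposition, which records that the unoriented isotopy class of each companion loop $\kappa_o,\kappa_u$ is a well-defined invariant of the rail knotoid $\kappa$, together with the explicit move-by-move analysis carried out in the proof of Lemma \ref{lemma}. There is no heavy machinery to invoke; the work is organizational and reduces to tracking what happens to companion loops under each rail move.

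For the first (ambient isotopy) part, given an ambient isotopy invariant $f\colon\mathcal{K}\to A$ of unoriented knots I would set $f_o(\kappa):=f(\kappa_o)$ and $f_u(\kappa):=f(\kappa_u)$, computed from any choice of companion loops of any representative diagram of $\kappa$. Well-definedness is then immediate: if $K_1,K_2$ represent the same $\kappa$ they are rail equivalent, so Lemma \ref{lemma} gives $(K_1)_o\simeq (K_2)_o$ and $(K_1)_u\simeq (K_2)_u$ as knots in space, while Lemma \ref{lemma0} removes any dependence on the auxiliary segments; since $f$ is invariant under ambient isotopy, the assigned values agree. Hence $f_o,f_u\colon\mathcal{RK}\to A$ are invariants of rail knotoids. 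For the oriented refinement I would observe that the leg-to-head orientation of $\kappa$ survives every rail knotoid move and so induces a coherent orientation ``$+$'' on each companion loop, with reverse ``$-$''; the isotopies supplied by Lemma \ref{lemma} act locally and carry the $+$-oriented loop of $K_1$ to that of $K_2$, so the four classes $\kappa_{o\pm},\kappa_{u\pm}$ are well-defined oriented isotopy classes and $f_{o\pm}(\kappa)=f(\kappa_{o\pm})$, $f_{u\pm}(\kappa)=f(\kappa_{u\pm})$ are well-defined for any oriented invariant $f\colon\mathcal{K}^{or}\to A$.

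For the second (regular isotopy) part the crux is a sharpening of Lemma \ref{lemma}: I would re-examine its proof one rail move at a time, but restricting to the regular rail moves $\Omega_2,\Omega_3$, slide, and planar isotopy, and check that each induces on the companion loops only moves of the same kinds, i.e.\ $\Omega_2$, $\Omega_3$ and planar isotopies, and in particular never an $\Omega_1$. For a rail-free $\Omega_2$ or $\Omega_3$ move this is transparent, since $K_o$ is just $K$ with three segments placed away from the affected disc, so the identical Reidemeister move applies to $K_o$, and planar isotopies remain planar isotopies. The moves touching a rail (the slide move and the rail versions of $\Omega_2,\Omega_3$) are the ones that require drawing, exactly as in Figures \ref{figure_slide_invariant} and \ref{figure_omega_1_behaviour}, and one verifies that in each case the two companion loops differ by $\Omega_2,\Omega_3$ and planar-isotopy moves only. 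It follows that regularly equivalent rail knotoid diagrams have regularly equivalent companion loops, whence $f_o(K)=f(K_o)$ and $f_u(K)=f(K_u)$ (and their four oriented analogues) are constant on regular equivalence classes and define regular equivalence invariants of rail knotoid diagrams.

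The one place where genuine work is needed is this move-by-move bookkeeping in the second part: confirming that none of the regular rail moves --- most delicately the slide move and the rail $\Omega_2,\Omega_3$ moves, in all of their configurations (moving strand above, below, or in the middle, and to the left or right of the rail, as catalogued in the caption of Figure \ref{figure_rail_moves}) --- secretly produces an $\Omega_1$ curl on a companion loop. In view of the figures already drawn I expect each case to be routine, but completeness requires checking them all.
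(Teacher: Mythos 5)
Your proposal is correct and follows essentially the same route as the paper: the first part is deduced from the Proposition (i.e.\ Lemma \ref{lemma}, with Lemma \ref{lemma0} handling independence of choices), and the second part from re-examining the proof of Lemma \ref{lemma} to see that regular rail moves induce only regular moves ($\Omega_2$, $\Omega_3$, planar isotopies) on the companion loops, which is exactly what the paper invokes when it says the arguments of that proof ``reveal that two regularly equivalent rail knotoid diagrams have regularly equivalent companion loops.'' Your explicit flagging of the case-by-case check that no regular rail move sneaks in an $\Omega_1$ curl is merely a more careful spelling-out of what the paper compresses into that one sentence, not a different argument.
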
 

We call the above as \textit{knot type invariants}, or just \textit{knot invariants} for the rail knotoids. If we  care to consider only the orientation induced by the rail knotoid for the companion loops, then all oriented invariants are reduced in number by a factor of $2$. 

In general, we expect the invariants of the Theorem to be distinct. As an example of how the Theorem applies, we give a  list of well-defined knot invariants for rail knotoids:

\begin{definition}\rm We define $4$ versions for each of the normalized bracket, the Jones and the HOMFLYPT polynomials, each an oriented equivalence invariant of rail knotoids, as follows:
	\[
\begin{array}{rcl}
\mathbb{X}_{o*}(\kappa)&:=& \mathbb{X}(\kappa_{o*})\vspace{1ex}\\
\mathbb{X}_{u*}(\kappa)&:=& \mathbb{X}(\kappa_{u*})
\vspace{1ex}\\
\mathbb{J}_{o*}(\kappa)&:=& \mathbb{J}( \kappa_{o*})
\vspace{1ex}\\
\mathbb{J}_{u*}(\kappa)&:=& \mathbb{J}( \kappa_{u*})
\vspace{1ex}\\
\mathbb{H}_{o*}(\kappa)&:=& \mathbb{H}( \kappa_{o*})
\vspace{1ex}\\
\mathbb{H}_{u*}(\kappa)&:=& \mathbb{H}( \kappa_{u*})\vspace{1ex}\\
\end{array}
\]

\noindent where $*\in\{+,-\} $ and on the right hand sides we have the usual normalized bracket, the Jones and the HOMFLYPT  polynomials for oriented knots.

We also define $2$ versions of the Kauffman polynomial, each an unoriented regular equivalence invariant of rail knotoids, as follows:
	\[
\begin{array}{rcl}
\mathbb{K}_{o}(\kappa)&:=& \mathbb{K}( \kappa_{o}) 
\vspace{1ex}\\
\mathbb{K}_{u}(\kappa)&:=& \mathbb{K}(\kappa_{u})
\end{array}
\]
\noindent where on the right hand sides we have the usual Kauffman polynomial for knots.

\end{definition}

The arguments in the proof of Lemma \ref{lemma}, imply that we can also define bracket polynomials for rail knotoid diagram which are regular equivalence invariants:

\begin{definition}\rm
For the rail knotoid diagram $K$ we define $2$ vesions of the bracket polynomial, each a regular equivalence invariant of rail knotoid diagrams:

\[
\begin{array}{rcl}
\left\langle K \right\rangle_o &:=&\left\langle K_o \right\rangle \vspace{1ex}\\
\left\langle K \right\rangle_u &:=&\left\langle K_u \right\rangle \end{array}
\]

\noindent where  on the right hand sides we have the usual bracket polynomial for a choice of a companion loop $K_o$ or $K_u$ of $K$. 
\end{definition}

Under a rail knotoid $\Omega_1$ move between rail knotoid diagrams, the rail bracket  polynomial exhibits the same behavior as the usual bracket polynomial, and denoting as usual by $w$ the writhe of oriented knots, it follows that the rail bracket polynomial is related to the rail normalized bracket  and rail Jones  polynomials  as described in the following Theorem:

\begin{theorem}
	
For any rail knotoid $\kappa$ it holds:

\[
\begin{array}{rcl}
\mathbb{X}_{o*}(\kappa)&=& (-A^3)^{-w(K_{o*})} \langle K_{o} \rangle 
\vspace{1ex}\\
\mathbb{X}_{u*}(\kappa)&=& (-A^3)^{-w(K_{u*})} \langle K_{u} \rangle \vspace{1ex}\\
\mathbb{J}_{o*}(\kappa)&=& (-t^{-3/4})^{-w(K_{o*})} \langle K_{o} \rangle  
\vspace{1ex}\\
\mathbb{J}_{u*}(\kappa)&=& (-t^{-3/4})^{-w(K_{u*})} \langle K_{u} \rangle   \\
\end{array}
\]

\noindent where $*\in\{+,-\} $, $K_{o},K_{u}$ are choices of the companion loops and $K_{o*},K_{u*}$ are choices of the oriented companion loops of  any rail knotoid diagram $K$ of the rail knotoid $\kappa$.
\end{theorem}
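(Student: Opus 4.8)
The plan is to reduce the four identities to the classical relations expressing the normalized bracket $\mathbb{X}$ and the Jones polynomial $\mathbb{J}$ of an oriented knot diagram in terms of its Kauffman bracket and writhe, applied directly to the companion loop diagrams. Through the definitions $\mathbb{X}_{o*}(\kappa):=\mathbb{X}(\kappa_{o*})$, $\mathbb{J}_{o*}(\kappa):=\mathbb{J}(\kappa_{o*})$ and $\langle K\rangle_o:=\langle K_o\rangle$ already in place, the statement becomes essentially a transcription of those classical identities, so no new topology is required.

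First I would fix a rail knotoid diagram $K$ of $\kappa$ together with a choice of over companion loop $K_o$ and its oriented version $K_{o*}$. By definition $\mathbb{X}_{o*}(\kappa)=\mathbb{X}(\kappa_{o*})$, and since $\mathbb{X}$ is an ambient isotopy invariant of oriented knots its value on the class $\kappa_{o*}$ coincides with the value computed from the specific diagram $K_{o*}$. The classical identity $\mathbb{X}(D)=(-A^3)^{-w(D)}\langle D\rangle$, applied to $D=K_{o*}$, then gives $\mathbb{X}(K_{o*})=(-A^3)^{-w(K_{o*})}\langle K_{o*}\rangle$. Since the Kauffman bracket ignores orientation we have $\langle K_{o*}\rangle=\langle K_o\rangle$, which is exactly $\langle K\rangle_o$, and assembling these equalities yields the first line. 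The second line is obtained verbatim after replacing the over companion loop by the under companion loop throughout.

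For the two Jones lines I would use the classical passage from $\mathbb{X}$ to $\mathbb{J}$ via the substitution $A=t^{-1/4}$, under which $(-A^3)^{-w}$ becomes $(-t^{-3/4})^{-w}$, and repeat the identical three-step reduction with $\mathbb{J}(D)=(-t^{-3/4})^{-w(D)}\langle D\rangle$ in place of the $\mathbb{X}$-identity; this produces the third and fourth lines.

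I do not anticipate a genuine difficulty, since Lemmas \ref{lemma0} and \ref{lemma} already make the companion loops well defined up to the appropriate equivalence. The one point deserving care is that each right-hand side combines quantities — the writhe $w(K_{o*})$ and the bracket $\langle K_o\rangle$ — that individually depend on the chosen diagram and are merely regular equivalence invariants, whereas the left-hand side is a bona fide invariant of $\kappa$. The identity is consistent precisely because the product $(-A^3)^{-w(K_{o*})}\langle K_o\rangle$ is also unchanged under $\Omega_1$, exactly as the classical normalized bracket is, as noted just before the statement; I would also remark that $w(K_{o*})$ does not depend on the sign $*$, since reversing the orientation of an entire knot preserves all crossing signs, so the right-hand side is unambiguous and matches the left-hand side for both choices of $*$.
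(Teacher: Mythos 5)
Your proposal is correct and follows essentially the same route as the paper, which states the theorem as an immediate consequence of the classical identities $\mathbb{X}(D)=(-A^3)^{-w(D)}\langle D\rangle$ and $\mathbb{J}(D)=(-t^{-3/4})^{-w(D)}\langle D\rangle$ applied to the companion loop diagrams, with well-definedness supplied by the earlier lemmas. Your added observations --- that the bracket ignores orientation and that $w(K_{o*})$ is independent of the sign $*$ --- are accurate and make explicit what the paper leaves implicit.
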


\section{Connections of rail knotoids to rail arcs}

As mentioned in the Introduction, rail knotoids were defined in \cite{KL} in order to study the isotopies of rail arcs which were introduced in \cite{GK,KL}. The connection between the two is given in the Theorem of \S \ref{section_rail_knotoids}, namely, two rail arcs are rail isotopic in space if and only if their rail knotoid diagrams are equivalent, that is, if and only if the two rail arcs correspond to the same rail knotoid.

In the light of the previous section we have that the  definition below refers to well-defined notions and that the Theorem following immediately after holds automatically:

\begin{definition} \rm
		To each isotopy class $c$ of rail arcs, there correspond $2$ isotopy classes of unoriented  knots, namely, those of the companion loops of the corresponding to $c$ rail knotoid $\kappa_c$. There also correspond to $c$, $4$  isotopy classes of oriented  knots, namely, those of the oriented companion loops of $\kappa_c$.
		
		Let us denote the above as $c_{a},c_{ab}$ for $a\in\{o,u\}, \ b\in\{+,-\}$ with the obvious correspondence of the indices with the various kinds of isotopy classes of knots.
\end{definition}

\begin{theorem}\label{theorem_rail_isotopy}	
	If $f$ is an unoriented or oriented knot invariant, then there exists an invariant of the isotopy classes of rail arcs defined as $f_{a}(c)=f(c_a)$ or $ f_{ab}(c)=f(c_{ab})$, depending on the kind of knots concerning $f$.
\end{theorem}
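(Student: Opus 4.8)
The plan is to realize $f_a$ and $f_{ab}$ as compositions of well-defined maps, each established earlier in the paper, so that the asserted invariance follows formally without any new geometric input.

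First I would use the Theorem of \S\ref{section_rail_knotoids}, which states that two rail arcs are rail isotopic in $\mathbb{R}^3$ if and only if their rail knotoid diagram projections on $\pi$ are rail equivalent. This makes the assignment sending a rail isotopy class $c$ to the rail knotoid $\kappa_c$ determined by the projection of any representative well-defined: any two representatives of $c$ are rail isotopic, hence project to rail equivalent diagrams, hence determine the same rail knotoid $\kappa_c$. This is the step carrying the genuine content, and it is precisely the correspondence already proved, so nothing further is needed here.

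Next I would invoke the Proposition of the previous section, asserting that the isotopy classes of the companion loops are invariants of rail knotoids (itself a restatement of Lemma \ref{lemma}). From $\kappa_c$ one thereby obtains well-defined isotopy classes of unoriented knots $(\kappa_c)_o,(\kappa_c)_u$ and of oriented knots $(\kappa_c)_{o\pm},(\kappa_c)_{u\pm}$; by the preceding Definition these are exactly the classes $c_a$ and $c_{ab}$ attached to $c$. Chaining the two assignments, the map $c\mapsto c_a$ (respectively $c\mapsto c_{ab}$) from rail isotopy classes of rail arcs to isotopy classes of (oriented) knots is well-defined.

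Finally, given a knot invariant $f$, that is, a map constant on isotopy classes of (oriented) knots, the composite $f_a(c)=f(c_a)$ (respectively $f_{ab}(c)=f(c_{ab})$) depends on $c$ only through $c_a$, which in turn depends on $c$ only through its rail isotopy class; hence $f_a$ and $f_{ab}$ agree on rail isotopic rail arcs, which is exactly the assertion. I do not expect any genuine obstacle here: all the substantive work resides in the two facts invoked, namely the rail isotopy/rail equivalence correspondence of \S\ref{section_rail_knotoids} and the invariance of the companion loops, so the proof is a formal composition. The only point requiring care is bookkeeping, checking that the indices $a\in\{o,u\}$ and $b\in\{+,-\}$ match up correctly between the rail-arc notation $c_a,c_{ab}$ and the rail-knotoid notation $\kappa_a,\kappa_{ab}$.
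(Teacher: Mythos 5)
Your proposal is correct and matches the paper's own reasoning: the paper states that the Theorem ``holds automatically'' from the well-definedness of the assignment $c\mapsto\kappa_c$ (via the Theorem of \S 1) and the invariance of the companion loops (the Proposition restating Lemma \ref{lemma}), which is exactly the composition you spell out. You have simply made explicit the formal chaining the paper leaves implicit, so there is nothing to add.
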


\section{Connections of rail knotoids to knotoids}

For the rail arcs in space, there has been at least one more effort in as much as we know, to study their rail isotopy diagrammatically  \cite{GK}. This effort uses the tool of the usual planar knotoids, whereupon one projects the rail arc on a plane $p$ perpendicular to the rails resulting in a planar knotoid diagram and then one translates the rail isotopy of arcs to  equivalence of knotoid diagrams, thus to knotoids, as the main result of this approach goes.

The difference of the method in \cite{GK} to ours concerns the choice of the projection plane and it is at least conceptual. Figure \ref{figure_comparison} makes it clear that the original rail arc is encoded in significantly different ways. It is interesting to note that the triad of sets $A$=(rail arc, rails, $p$= projection plane perpendicular to the rails) which is of interest in the case of knotoids, is not  isotopic in space to $B$=(rail arc, rails, $\pi$= plane of the rails)  which is of interest in the case of rail knotoids (see Figure \ref{figure_rail_arcs}). Nevertheless this is not entirely unexpected, as for example none of $A,B$ remains ambient isotopic invariant under the various rail isotopic choices of the rail arc $c$.  

\begin{figure}
	\centering
\definecolor{ffqqqq}{rgb}{1.,0.,0.}
\begin{tikzpicture}[line cap=round,line join=round,>=triangle 45,x=1.0cm,y=1.0cm]
clip(-0.5684768639573702,-2.959970080927272) rectangle (9.574207584995872,5.229989162678216);
\draw [shift={(1.9599846506341152,-4.621212329830088)},line width=0.4pt]  plot[domain=1.445942772822315:1.6443585287963023,variable=\t]({1.*8.945852281313767*cos(\t r)+0.*8.945852281313767*sin(\t r)},{0.*8.945852281313767*cos(\t r)+1.*8.945852281313767*sin(\t r)});
\draw [shift={(3.66123781920074,3.754244689349341)},line width=0.4pt]  plot[domain=-1.0005806608179686:1.0369087230177279,variable=\t]({1.*0.33706815578405974*cos(\t r)+0.*0.33706815578405974*sin(\t r)},{0.*0.33706815578405974*cos(\t r)+1.*0.33706815578405974*sin(\t r)});
\draw [shift={(-0.04144855591274061,10.004814604161622)},line width=0.4pt]  plot[domain=4.911642736258371:5.248753851412097,variable=\t]({1.*7.601816934580359*cos(\t r)+0.*7.601816934580359*sin(\t r)},{0.*7.601816934580359*cos(\t r)+1.*7.601816934580359*sin(\t r)});
\draw [shift={(1.5803359560938772,2.1402199892244806)},line width=0.4pt]  plot[domain=2.689536141245218:4.2786253653758815,variable=\t]({1.*0.4882276292562123*cos(\t r)+0.*0.4882276292562123*sin(\t r)},{0.*0.4882276292562123*cos(\t r)+1.*0.4882276292562123*sin(\t r)});
\draw [shift={(2.6491329133262878,5.344668011443012)},line width=0.4pt]  plot[domain=4.376353683400649:4.8558399011415645,variable=\t]({1.*3.863551724386354*cos(\t r)+0.*3.863551724386354*sin(\t r)},{0.*3.863551724386354*cos(\t r)+1.*3.863551724386354*sin(\t r)});
\draw [line width=0.4pt,color=ffqqqq] (1.3025014164853015,4.965913163119643)-- (1.3025014164853015,1.9773817571705958);
\draw [line width=0.4pt,color=ffqqqq] (3.2014640806820824,4.9970436985982785)-- (3.2014640806820824,3.419763234347393);
\draw [shift={(7.423756024206408,-4.6212123298301)},line width=0.4pt]  plot[domain=1.445942772822315:1.6443585287963023,variable=\t]({1.*8.94585228131378*cos(\t r)+0.*8.94585228131378*sin(\t r)},{0.*8.94585228131378*cos(\t r)+1.*8.94585228131378*sin(\t r)});
\draw [shift={(9.125009192773033,3.7542446893493424)},line width=0.4pt]  plot[domain=-1.0005806608179721:1.036908723017727,variable=\t]({1.*0.33706815578406063*cos(\t r)+0.*0.33706815578406063*sin(\t r)},{0.*0.33706815578406063*cos(\t r)+1.*0.33706815578406063*sin(\t r)});
\draw [shift={(5.422322817659544,10.00481460416164)},line width=0.4pt]  plot[domain=4.911642736258372:5.248753851412097,variable=\t]({1.*7.601816934580379*cos(\t r)+0.*7.601816934580379*sin(\t r)},{0.*7.601816934580379*cos(\t r)+1.*7.601816934580379*sin(\t r)});
\draw [shift={(7.0441073296661685,2.14021998922448)},line width=0.4pt]  plot[domain=2.689536141245216:4.278625365375883,variable=\t]({1.*0.48822762925621105*cos(\t r)+0.*0.48822762925621105*sin(\t r)},{0.*0.48822762925621105*cos(\t r)+1.*0.48822762925621105*sin(\t r)});
\draw [shift={(8.112904286898578,5.344668011443001)},line width=0.4pt]  plot[domain=4.376353683400649:4.855839901141565,variable=\t]({1.*3.8635517243863426*cos(\t r)+0.*3.8635517243863426*sin(\t r)},{0.*3.8635517243863426*cos(\t r)+1.*3.8635517243863426*sin(\t r)});
\draw [line width=0.4pt,color=ffqqqq] (8.665235454254375,3.004689427965581)-- (8.665235454254375,-1.0159892551622187);
\draw [line width=0.4pt,color=ffqqqq] (6.766272790057594,1.5311774153101485)-- (6.766272790057594,-1.0297777949224782);
\draw [line width=0.4pt,color=ffqqqq] (6.766272790057594,4.611004050583099)-- (6.766272790057594,4.965913163119643);
\draw [line width=0.4pt,color=ffqqqq] (6.766272790057594,4.611004050583099)-- (6.766272790057594,1.9773817571705958);
\draw [line width=0.4pt,color=ffqqqq] (8.665235454254375,4.9970436985982785)-- (8.665235454254375,3.419763234347393);
\draw [line width=0.4pt] (0.19259183453671883,1.0936573281575455)-- (-0.1659101992300375,-0.14731125026584502);
\draw [line width=0.4pt] (-0.1659101992300375,-0.14731125026584502)-- (4.439462080696756,-0.10594563098506533);
\draw [line width=0.4pt] (4.797964114463512,1.135022947438325)-- (4.439462080696756,-0.10594563098506533);
\draw [shift={(2.8891249934900234,2.0551960992755487)},line width=0.4pt]  plot[domain=1.2174638286973338:1.4007018200900474,variable=\t]({1.*2.210292958581322*cos(\t r)+0.*2.210292958581322*sin(\t r)},{0.*2.210292958581322*cos(\t r)+1.*2.210292958581322*sin(\t r)});
\draw [shift={(2.8130060995137156,-1.8543677381885584)},line width=0.4pt]  plot[domain=1.5094716484677242:2.157451202042953,variable=\t]({1.*2.72861967660218*cos(\t r)+0.*2.72861967660218*sin(\t r)},{0.*2.72861967660218*cos(\t r)+1.*2.72861967660218*sin(\t r)});
\draw [shift={(3.112705263764707,-1.089784375935884)},line width=0.4pt]  plot[domain=1.2925435653448993:1.4793350242080112,variable=\t]({1.*1.9471715413703072*cos(\t r)+0.*1.9471715413703072*sin(\t r)},{0.*1.9471715413703072*cos(\t r)+1.*1.9471715413703072*sin(\t r)});
\draw [shift={(3.7069467412406834,0.5063607841023098)},line width=0.4pt]  plot[domain=-0.9697168301771075:1.2933916926539513,variable=\t]({1.*0.23882349117614377*cos(\t r)+0.*0.23882349117614377*sin(\t r)},{0.*0.23882349117614377*cos(\t r)+1.*0.23882349117614377*sin(\t r)});
\draw [shift={(3.1841098107963224,2.0117618976527494)},line width=0.4pt]  plot[domain=3.999016798890155:5.081170544213329,variable=\t]({1.*1.825069378983462*cos(\t r)+0.*1.825069378983462*sin(\t r)},{0.*1.825069378983462*cos(\t r)+1.*1.825069378983462*sin(\t r)});
\draw [shift={(1.6089140266861097,0.3380057646664064)},line width=0.4pt]  plot[domain=1.0791871989036625:2.09033193549408,variable=\t]({1.*0.49837198902584917*cos(\t r)+0.*0.49837198902584917*sin(\t r)},{0.*0.49837198902584917*cos(\t r)+1.*0.49837198902584917*sin(\t r)});
\draw [shift={(1.4087178831559033,0.47781122527062336)},line width=0.4pt]  plot[domain=2.3491280499025846:4.091374488103748,variable=\t]({1.*0.28601237319440026*cos(\t r)+0.*0.28601237319440026*sin(\t r)},{0.*0.28601237319440026*cos(\t r)+1.*0.28601237319440026*sin(\t r)});
\draw [shift={(2.167979623071038,2.4472894570924932)},line width=0.4pt]  plot[domain=4.314450767782321:4.994553654522356,variable=\t]({1.*2.3887403841583574*cos(\t r)+0.*2.3887403841583574*sin(\t r)},{0.*2.3887403841583574*cos(\t r)+1.*2.3887403841583574*sin(\t r)});
\draw [shift={(2.4024638069877735,1.257003072012459)},line width=0.4pt]  plot[domain=5.222878857972777:5.473381552426566,variable=\t]({1.*1.1585749487927424*cos(\t r)+0.*1.1585749487927424*sin(\t r)},{0.*1.1585749487927424*cos(\t r)+1.*1.1585749487927424*sin(\t r)});
\draw [line width=0.8pt,dotted,color=ffqqqq] (1.3025014164853015,0.10221455338024787)-- (1.3025014164853015,-0.050018938224290155);
\draw [line width=0.8pt,color=ffqqqq] (1.3025014164853015,-0.30612124725953915)-- (1.3025014164853015,-1.0297777949224782);
\draw [line width=0.8pt,dotted,color=ffqqqq] (3.2014640806820833,0.1022145533802473)-- (3.2014640806820833,-0.050018938224290745);
\draw [line width=0.8pt,color=ffqqqq] (3.2014640806820833,-0.30612124725953976)-- (3.2014640806820824,-1.0159892551622187);
\draw [line width=0.4pt] (0.19259183453671883,1.0936573281575455)-- (0.4351553799932838,1.0958360426376943);
\draw [line width=0.4pt] (0.6999844592141737,1.0982147469420735)-- (1.132622387830432,1.1021007163607823);
\draw [line width=0.4pt] (1.4618996531916464,1.1050582965885778)-- (3.024294567646276,1.1190917838441583);
\draw [line width=0.4pt] (3.3471734078087962,1.1219918931869355)-- (3.5926912278952576,1.1241971430679516);
\draw [line width=0.4pt] (3.8250062552001234,1.1262838049898516)-- (4.797964114463512,1.135022947438325);
\draw [line width=0.4pt,dotted] (0.5924594875842547,0.41801887990481035)-- (0.5924594875842547,-0.045071926734751244);
\draw [line width=0.4pt] (3.708669473402983,3.1438450090904495)-- (3.708669473402983,0.41801887990481035);
\draw [line width=0.4pt,dotted] (3.708669473402983,0.18623239719038231)-- (3.7086694734029835,-0.045071926734751244);
\draw [line width=0.8pt,color=ffqqqq] (1.3025014164853015,1.5311774153101485)-- (1.3025014164853017,0.4180188799048104);
\draw [line width=0.8pt,color=ffqqqq] (1.3025014164853017,0.4180188799048104)-- (1.3025014164853015,0.4038511804844791);
\draw [line width=0.8pt,color=ffqqqq] (3.2014640806820824,3.004689427965581)-- (3.201464080682083,0.41801887990481046);
\draw [line width=0.8pt,dotted,color=ffqqqq] (3.201464080682083,0.41801887990481046)-- (3.2014640806820824,0.4038511804844791);
\draw (1.9700659029946872,4.75) node[anchor=north west] {$c$};
\draw (7.575535904228189,4.8) node[anchor=north west] {$K_2$};
\draw (0.4,-2.1214469045132462) node[anchor=north west] {$K_1$};
\draw (0.55,4.977283547868509) node[anchor=north west] {$\pi$};
\draw (-0.04009239662798266,0.39412262472883197) node[anchor=north west] {$p$};
\draw [shift={(8.727403563026904,3.2170611681163175)},line width=0.4pt]  plot[domain=0.9682337770752321:1.5117444492855252,variable=\t]({1.*1.0041968022010386*cos(\t r)+0.*1.0041968022010386*sin(\t r)},{0.*1.0041968022010386*cos(\t r)+1.*1.0041968022010386*sin(\t r)});
\draw [shift={(2.813006099513715,-4.695562990038753)},line width=0.4pt]  plot[domain=1.509471648467724:2.1574512020429535,variable=\t]({1.*2.7286196766021793*cos(\t r)+0.*2.7286196766021793*sin(\t r)},{0.*2.7286196766021793*cos(\t r)+1.*2.7286196766021793*sin(\t r)});
\draw [shift={(3.7069467412406842,-2.334834467747886)},line width=0.4pt]  plot[domain=-0.9697168301771093:1.2933916926539548,variable=\t]({1.*0.23882349117614357*cos(\t r)+0.*0.23882349117614357*sin(\t r)},{0.*0.23882349117614357*cos(\t r)+1.*0.23882349117614357*sin(\t r)});
\draw [shift={(1.3441997514729447,-2.401431083340495)},line width=0.4pt]  plot[domain=2.3776805876819633:4.252847116180771,variable=\t]({1.*0.28160720288304886*cos(\t r)+0.*0.28160720288304886*sin(\t r)},{0.*0.28160720288304886*cos(\t r)+1.*0.28160720288304886*sin(\t r)});
\draw [shift={(1.5317192287585515,-2.6247406566243368)},line width=0.4pt]  plot[domain=1.174241818365235:2.322543169791534,variable=\t]({1.*0.5723662570772621*cos(\t r)+0.*0.5723662570772621*sin(\t r)},{0.*0.5723662570772621*cos(\t r)+1.*0.5723662570772621*sin(\t r)});
\draw [shift={(2.201047856060525,-0.40117226178152243)},line width=0.4pt]  plot[domain=4.30138907627652:5.013408134076563,variable=\t]({1.*2.4572895775595325*cos(\t r)+0.*2.4572895775595325*sin(\t r)},{0.*2.4572895775595325*cos(\t r)+1.*2.4572895775595325*sin(\t r)});
\draw [shift={(2.039152725599072,-1.418154520543515)},line width=0.4pt]  plot[domain=5.41435756126732:5.581216726296956,variable=\t]({1.*1.5977233342467485*cos(\t r)+0.*1.5977233342467485*sin(\t r)},{0.*1.5977233342467485*cos(\t r)+1.*1.5977233342467485*sin(\t r)});
\draw [shift={(3.1635809821473897,-0.6892385568145063)},line width=0.4pt]  plot[domain=4.051508183608188:4.85141331561854,variable=\t]({1.*1.991646183681109*cos(\t r)+0.*1.991646183681109*sin(\t r)},{0.*1.991646183681109*cos(\t r)+1.*1.991646183681109*sin(\t r)});
\draw [shift={(3.2705171091832304,-1.4493507465833924)},line width=0.4pt]  plot[domain=4.850947354803074:5.198155897874644,variable=\t]({1.*1.224049058236359*cos(\t r)+0.*1.224049058236359*sin(\t r)},{0.*1.224049058236359*cos(\t r)+1.*1.224049058236359*sin(\t r)});
\draw [shift={(2.9835978333101596,-4.376195073170935)},line width=0.4pt]  plot[domain=1.2365224905943704:1.5721959513988593,variable=\t]({1.*2.4041249401903704*cos(\t r)+0.*2.4041249401903704*sin(\t r)},{0.*2.4041249401903704*cos(\t r)+1.*2.4041249401903704*sin(\t r)});
\draw [line width=0.4pt] (0.5924594875842547,0.41801887990481035)-- (0.5924594875842547,5.080663117563389);
\draw [line width=0.4pt] (0.5924594875842547,5.080663117563389)-- (3.7086694734029835,5.080663117563389);
\draw [line width=0.4pt] (3.7086694734029835,5.080663117563389)-- (3.708669473402983,3.586117686102134);
\draw [line width=0.4pt] (0.5924594875842547,-0.30612124725953915)-- (0.5924594875842547,-1.1106244452744203);
\draw [line width=0.4pt] (0.5924594875842547,-1.1106244452744203)-- (3.7086694734029835,-1.1106244452744203);
\draw [line width=0.4pt] (3.7086694734029835,-1.1106244452744203)-- (3.7086694734029835,-0.30612124725953915);
\begin{scriptsize}
\draw [fill=black] (1.3025014164853015,4.300446083038002) circle (1.0pt);
\draw [fill=black] (3.2014640806820824,1.5208005701506027) circle (1.0pt);
\draw [fill=black] (6.766272790057594,4.300446083038002) circle (1.0pt);
\draw [fill=black] (8.665235454254375,1.5208005701506027) circle (1.0pt);
\draw [fill=black] (3.201464080682083,0.41801887990481046) circle (1.0pt);
\draw [fill=black] (1.3025014164853015,0.4038511804844791) circle (1.0pt);
\draw [fill=black] (1.3025014164853017,-2.4231763719453854) circle (1.0pt);
\draw [fill=black] (3.2591303445803397,-2.449839773894238) circle (1.0pt);
\end{scriptsize}
\end{tikzpicture}
	\caption{A rail arc $c$ and its corresponding knotoid and rail knotoid diagrams $K_1$ and $K_2$ respectively.}
\label{figure_comparison}	
\end{figure}
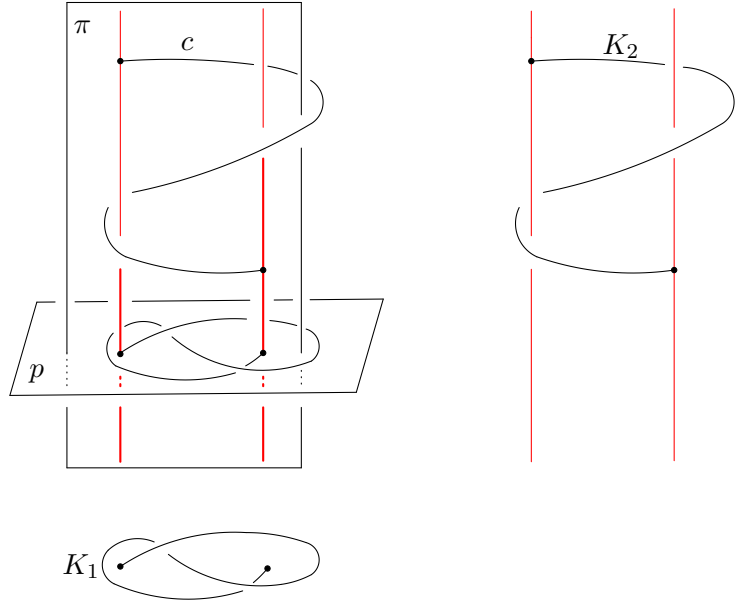

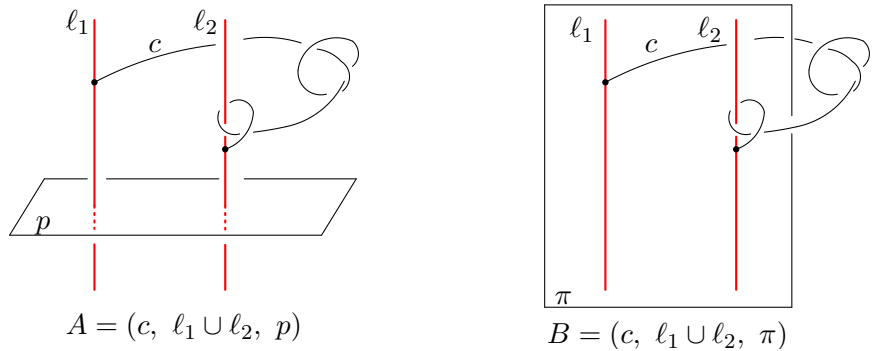
\begin{figure}
	\centering
	\definecolor{ffqqqq}{rgb}{1.,0.,0.}
	\begin{tikzpicture}[line cap=round,line join=round,>=triangle 45,x=1.0cm,y=1.0cm]
	\clip(1.5630270952355263,2.7) rectangle (13.5,7.344936248563249);
\draw [line width=0.4pt] (1.888869779206626,4.211336883714576)-- (6.035784843388968,4.211336883714576);
\draw [line width=0.4pt] (1.888869779206626,4.211336883714576)-- (2.3507153645204215,4.9608895549615655);
\draw [line width=0.4pt] (6.497630428702763,4.9608895549615655)-- (6.035784843388968,4.211336883714576);
\draw [line width=0.8pt,color=ffqqqq] (9.809720865599461,7.073265264839444)-- (9.809720865599461,3.4844979297781022);
\draw [line width=0.4pt] (9.002412949132255,7.275490406286499)-- (9.002412949132255,3.2525870158093584);
\draw [line width=0.4pt] (9.002412949132255,7.275490406286499)-- (12.288147568676091,7.275490406286499);
\draw [line width=0.4pt] (12.288147568676091,3.2525870158093584)-- (9.002412949132255,3.2525870158093584);
\draw [shift={(5.1424548334841,2.238416134351101)},line width=0.4pt]  plot[domain=1.6853446623248363:2.0584873121090985,variable=\t]({1.*4.535911143695885*cos(\t r)+0.*4.535911143695885*sin(\t r)},{0.*4.535911143695885*cos(\t r)+1.*4.535911143695885*sin(\t r)});
\draw [shift={(6.165968007865688,6.320138130917781)},line width=0.4pt]  plot[domain=-0.9090662575001538:0.9923514156973319,variable=\t]({1.*0.24226291935302283*cos(\t r)+0.*0.24226291935302283*sin(\t r)},{0.*0.24226291935302283*cos(\t r)+1.*0.24226291935302283*sin(\t r)});
\draw [shift={(5.282348238349789,5.2300418232257755)},line width=0.4pt]  plot[domain=1.2669866701075727:1.7457523046311114,variable=\t]({1.*1.6130549761659305*cos(\t r)+0.*1.6130549761659305*sin(\t r)},{0.*1.6130549761659305*cos(\t r)+1.*1.6130549761659305*sin(\t r)});
\draw [shift={(6.204868053391834,6.34658325187964)},line width=0.4pt]  plot[domain=1.8031194837987232:3.0144356198078617,variable=\t]({1.*0.48491668043597475*cos(\t r)+0.*0.48491668043597475*sin(\t r)},{0.*0.48491668043597475*cos(\t r)+1.*0.48491668043597475*sin(\t r)});
\draw [shift={(6.309704444107993,6.62558684206024)},line width=0.4pt]  plot[domain=-0.8893986781627756:0.8893986781627703,variable=\t]({1.*0.21663861102863105*cos(\t r)+0.*0.21663861102863105*sin(\t r)},{0.*0.21663861102863105*cos(\t r)+1.*0.21663861102863105*sin(\t r)});
\draw [shift={(6.02638382590094,6.382174114055326)},line width=0.4pt]  plot[domain=3.0561739652779294:4.962142713419938,variable=\t]({1.*0.3036244470810311*cos(\t r)+0.*0.3036244470810311*sin(\t r)},{0.*0.3036244470810311*cos(\t r)+1.*0.3036244470810311*sin(\t r)});
\draw [shift={(5.913020909225199,6.165646600533312)},line width=0.4pt]  plot[domain=0.7476423546557244:1.4462476259445392,variable=\t]({1.*0.5255705655408514*cos(\t r)+0.*0.5255705655408514*sin(\t r)},{0.*0.5255705655408514*cos(\t r)+1.*0.5255705655408514*sin(\t r)});
\draw [shift={(6.234937330459185,6.3080246633316115)},line width=0.4pt]  plot[domain=1.1606768062067996:1.8416066999258576,variable=\t]({1.*0.5297547816122091*cos(\t r)+0.*0.5297547816122091*sin(\t r)},{0.*0.5297547816122091*cos(\t r)+1.*0.5297547816122091*sin(\t r)});
\draw [shift={(11.935191293454709,2.2384161343511146)},line width=0.4pt]  plot[domain=1.6853446623248356:2.058487312109099,variable=\t]({1.*4.53591114369587*cos(\t r)+0.*4.53591114369587*sin(\t r)},{0.*4.53591114369587*cos(\t r)+1.*4.53591114369587*sin(\t r)});
\draw [shift={(12.819120285871552,6.382174114055323)},line width=0.4pt]  plot[domain=3.0561739652779205:4.96214271341994,variable=\t]({1.*0.3036244470810285*cos(\t r)+0.*0.3036244470810285*sin(\t r)},{0.*0.3036244470810285*cos(\t r)+1.*0.3036244470810285*sin(\t r)});
\draw [shift={(12.997604513362443,6.346583251879643)},line width=0.4pt]  plot[domain=1.803119483798718:3.014435619807868,variable=\t]({1.*0.4849166804359695*cos(\t r)+0.*0.4849166804359695*sin(\t r)},{0.*0.4849166804359695*cos(\t r)+1.*0.4849166804359695*sin(\t r)});
\draw [shift={(13.027673790429798,6.3080246633316115)},line width=0.4pt]  plot[domain=1.1606768062067996:1.8416066999258591,variable=\t]({1.*0.5297547816122093*cos(\t r)+0.*0.5297547816122093*sin(\t r)},{0.*0.5297547816122093*cos(\t r)+1.*0.5297547816122093*sin(\t r)});
\draw [shift={(13.1024409040786,6.62558684206024)},line width=0.4pt]  plot[domain=-0.8893986781627534:0.889398678162748,variable=\t]({1.*0.2166386110286336*cos(\t r)+0.*0.2166386110286336*sin(\t r)},{0.*0.2166386110286336*cos(\t r)+1.*0.2166386110286336*sin(\t r)});
\draw [shift={(12.705757369195812,6.165646600533317)},line width=0.4pt]  plot[domain=0.7476423546557193:1.4462476259445411,variable=\t]({1.*0.5255705655408455*cos(\t r)+0.*0.5255705655408455*sin(\t r)},{0.*0.5255705655408455*cos(\t r)+1.*0.5255705655408455*sin(\t r)});
\draw [shift={(12.958704467836304,6.320138130917781)},line width=0.4pt]  plot[domain=-0.9090662575001653:0.9923514156973472,variable=\t]({1.*0.24226291935302136*cos(\t r)+0.*0.24226291935302136*sin(\t r)},{0.*0.24226291935302136*cos(\t r)+1.*0.24226291935302136*sin(\t r)});
\draw [shift={(12.075084698320403,5.230041823225804)},line width=0.4pt]  plot[domain=1.5063319342325:1.7457523046311152,variable=\t]({1.*1.6130549761659023*cos(\t r)+0.*1.6130549761659023*sin(\t r)},{0.*1.6130549761659023*cos(\t r)+1.*1.6130549761659023*sin(\t r)});
\draw [shift={(12.075084698320461,5.230041823226031)},line width=0.4pt]  plot[domain=1.266986670107561:1.4133022627014413,variable=\t]({1.*1.6130549761656683*cos(\t r)+0.*1.6130549761656683*sin(\t r)},{0.*1.6130549761656683*cos(\t r)+1.*1.6130549761656683*sin(\t r)});
\draw [line width=0.4pt] (12.288147568676091,7.275490406286499)-- (12.288147568676091,5.801711532237226);
\draw [line width=0.4pt] (12.288147568676091,5.561173323298162)-- (12.288147568676091,3.2525870158093584);
\draw [line width=0.8pt,color=ffqqqq] (3.016984405628848,4.090197058058497)-- (3.016984405628848,3.4844979297781022);
\draw [line width=0.8pt,color=ffqqqq] (4.755638769444044,4.082625818954993)-- (4.755638769444044,3.4844979297781022);
\draw [line width=0.4pt] (2.3507153645204215,4.9608895549615655)-- (2.9224262249287674,4.9608895549615655);
\draw [line width=0.4pt] (3.1768707777527214,4.9608895549615655)-- (4.670706539493355,4.9608895549615655);
\draw [line width=0.4pt] (5.007229980325036,4.9608895549615655)-- (6.497630428702763,4.9608895549615655);
\draw [line width=0.8pt,color=ffqqqq] (3.016984405628848,7.073265264839444)-- (3.016984405628848,4.605041317096833);
\draw [line width=0.8pt,dotted,color=ffqqqq] (3.016984405628848,4.605041317096833)-- (3.016984405628848,4.294620513853131);
\draw [line width=0.8pt,dotted,color=ffqqqq] (4.755638769444044,4.605041317096833)-- (4.755638769444044,4.287049274749626);
\draw [shift={(4.576372245136618,5.879118414956251)},line width=0.4pt]  plot[domain=5.0417128980031976:6.228110483937388,variable=\t]({1.*0.5543125119013298*cos(\t r)+0.*0.5543125119013298*sin(\t r)},{0.*0.5543125119013298*cos(\t r)+1.*0.5543125119013298*sin(\t r)});
\draw [shift={(4.934148858791462,5.812144430168311)},line width=0.4pt]  plot[domain=0.1842017557222185:2.1870758851406187,variable=\t]({1.*0.19906302498268078*cos(\t r)+0.*0.19906302498268078*sin(\t r)},{0.*0.19906302498268078*cos(\t r)+1.*0.19906302498268078*sin(\t r)});
\draw [shift={(4.883954842814907,5.775748061316205)},line width=0.4pt]  plot[domain=2.734001129563779:4.9986122064633385,variable=\t]({1.*0.2261725957012447*cos(\t r)+0.*0.2261725957012447*sin(\t r)},{0.*0.2261725957012447*cos(\t r)+1.*0.2261725957012447*sin(\t r)});
\draw [shift={(5.337200378913295,6.745049224444948)},line width=0.4pt]  plot[domain=4.9134570762437075:5.832726076495774,variable=\t]({1.*1.1133137627604077*cos(\t r)+0.*1.1133137627604077*sin(\t r)},{0.*1.1133137627604077*cos(\t r)+1.*1.1133137627604077*sin(\t r)});
\draw [shift={(4.642083259921757,9.789377117550492)},line width=0.4pt]  plot[domain=4.827682869882762:4.930718586642295,variable=\t]({1.*4.235767135465101*cos(\t r)+0.*4.235767135465101*sin(\t r)},{0.*4.235767135465101*cos(\t r)+1.*4.235767135465101*sin(\t r)});
\draw [line width=0.8pt,color=ffqqqq] (4.755638769444044,7.073265264839444)-- (4.755638769444044,5.700383831150284);
\draw [line width=0.8pt,color=ffqqqq] (4.755638769444044,5.521292638718075)-- (4.755638769444044,4.605041317096833);
\draw [shift={(12.129936838883907,6.745049224444946)},line width=0.4pt]  plot[domain=4.913457076243709:5.832726076495776,variable=\t]({1.*1.113313762760407*cos(\t r)+0.*1.113313762760407*sin(\t r)},{0.*1.113313762760407*cos(\t r)+1.*1.113313762760407*sin(\t r)});
\draw [shift={(11.43481971989235,9.789377117550597)},line width=0.4pt]  plot[domain=4.827682869882763:4.9307185866422945,variable=\t]({1.*4.235767135465208*cos(\t r)+0.*4.235767135465208*sin(\t r)},{0.*4.235767135465208*cos(\t r)+1.*4.235767135465208*sin(\t r)});
\draw [shift={(11.369108705107225,5.8791184149562525)},line width=0.4pt]  plot[domain=5.0417128980032055:6.228110483937385,variable=\t]({1.*0.5543125119013331*cos(\t r)+0.*0.5543125119013331*sin(\t r)},{0.*0.5543125119013331*cos(\t r)+1.*0.5543125119013331*sin(\t r)});
\draw [shift={(11.726885318762076,5.812144430168311)},line width=0.4pt]  plot[domain=0.1842017557222193:2.1870758851406222,variable=\t]({1.*0.19906302498267991*cos(\t r)+0.*0.19906302498267991*sin(\t r)},{0.*0.19906302498267991*cos(\t r)+1.*0.19906302498267991*sin(\t r)});
\draw [shift={(11.676691302785517,5.775748061316204)},line width=0.4pt]  plot[domain=2.7340011295637723:4.998612206463343,variable=\t]({1.*0.2261725957012434*cos(\t r)+0.*0.2261725957012434*sin(\t r)},{0.*0.2261725957012434*cos(\t r)+1.*0.2261725957012434*sin(\t r)});
\draw [line width=0.8pt,color=ffqqqq] (11.548375229414656,7.073265264839444)-- (11.548375229414656,5.700383831150284);
\draw [line width=0.8pt,color=ffqqqq] (11.548375229414656,5.521292638718075)-- (11.548375229414656,3.4844979297781022);
\draw (2.5,3.32) node[anchor=north west] {$A=(c,\ \ell_1\cup\ell_2, \ p)$};
\draw (8.9,3.2) node[anchor=north west] {$B=(c, \ \ell_1\cup \ell_2, \ \pi)$};
\draw (2.5,7.35) node[anchor=north west] {$\ell_1$};
\draw (4.2,7.35) node[anchor=north west] {$\ell_2$};
\draw (9.2,7.25) node[anchor=north west] {$\ell_1$};
\draw (10.9,7.25) node[anchor=north west] {$\ell_2$};
\draw (2.1012323728803306,4.6) node[anchor=north west] {$p$};
\draw (9,3.6) node[anchor=north west] {$\pi$};
\draw (3.6,6.95) node[anchor=north west] {$c$};
\draw (10.2,6.95) node[anchor=north west] {$c$}; 
\begin{scriptsize}
\draw [fill=black] (3.016984405628848,6.2455180002658315) circle (1.0pt);
\draw [fill=black] (4.755638769444044,5.354593988343822) circle (1.0pt);
\draw [fill=black] (9.809720865599461,6.2455180002658315) circle (1.0pt);
\draw [fill=black] (11.548375229414656,5.354593988343822) circle (1.0pt);
\end{scriptsize}
\end{tikzpicture}
\caption{$A$ and $B$ are not isotopic in space.}
\label{figure_rail_arcs}
\end{figure}

In the knotoid method, the  invariants applied to rail isotopy come from the invariants of knotoids and many of these are produced by the over or under completion of a knotoid diagram to a knot by connecting the head and leg of the knotoid by an arc totally on the top or totally in the bottom of the knotoid diagram \cite{GK,Tu}. Our over and under companion loops (see Figure \ref{figure_companion_loops}) for rail knotoids correspond to the the over and under closure for knotoids. 

Let us also note that the obvious way to correspond a rail knotoid diagram to a knotoid diagram by just forgetting the rails, is not a proper way to correspond equivalent classes of such diagrams (rail knotoids and knotoids), since for example the rail knotoid of the diagram in Figure \ref{figure_corespondance} is non trivial (its over companion loop is non trivial) but the corresponding knotoid diagram when we forget the rails represents the trivial knotoid.

\begin{figure}
	\centering
	\definecolor{ffqqqq}{rgb}{1.,0.,0.}
\begin{tikzpicture}[line cap=round,line join=round,>=triangle 45,x=1.0cm,y=1.0cm]
\clip(1.7278744543094895,3.6525169540970137) rectangle (9.436992819629301,6.20458945578052);
\draw [shift={(1.9957699809723821,5.155855965913496)},line width=0.4pt]  plot[domain=-1.782397145562955:1.5520165821775895,variable=\t]({1.*0.5311544590003275*cos(\t r)+0.*0.5311544590003275*sin(\t r)},{0.*0.5311544590003275*cos(\t r)+1.*0.5311544590003275*sin(\t r)});
\draw [shift={(2.3327131329134763,5.335347187679804)},line width=0.4pt]  plot[domain=2.662674876310721:4.496762038732055,variable=\t]({1.*0.5933848566941505*cos(\t r)+0.*0.5933848566941505*sin(\t r)},{0.*0.5933848566941505*cos(\t r)+1.*0.5933848566941505*sin(\t r)});
\draw [line width=0.8pt,color=ffqqqq] (2.9866668619088794,6.120953278435534)-- (2.9866668619088794,3.7511139107414495);
\draw [shift={(2.8338050929288503,6.42598234302416)},line width=0.4pt]  plot[domain=4.4739275061712265:4.79854355839295,variable=\t]({1.*1.7764701770836266*cos(\t r)+0.*1.7764701770836266*sin(\t r)},{0.*1.7764701770836266*cos(\t r)+1.*1.7764701770836266*sin(\t r)});
\draw [line width=0.8pt,color=ffqqqq] (1.8842141157654901,6.120953278435534)-- (1.8842141157654901,5.166072947130226);
\draw [line width=0.8pt,color=ffqqqq] (1.8842141157654901,4.836205196315664)-- (1.8842141157654901,3.7511139107414495);
\draw [shift={(5.02121667460224,5.155855965913496)},line width=0.4pt]  plot[domain=-1.7823971455629515:1.5520165821775869,variable=\t]({1.*0.5311544590003271*cos(\t r)+0.*0.5311544590003271*sin(\t r)},{0.*0.5311544590003271*cos(\t r)+1.*0.5311544590003271*sin(\t r)});
\draw [shift={(5.358159826543334,5.335347187679803)},line width=0.4pt]  plot[domain=2.6626748763107186:4.496762038732058,variable=\t]({1.*0.5933848566941501*cos(\t r)+0.*0.5933848566941501*sin(\t r)},{0.*0.5933848566941501*cos(\t r)+1.*0.5933848566941501*sin(\t r)});
\draw [shift={(5.859251786558709,6.425982343024171)},line width=0.4pt]  plot[domain=4.473927506171227:4.79854355839295,variable=\t]({1.*1.7764701770836382*cos(\t r)+0.*1.7764701770836382*sin(\t r)},{0.*1.7764701770836382*cos(\t r)+1.*1.7764701770836382*sin(\t r)});
\draw [shift={(8.340364566830916,5.155855965913496)},line width=0.4pt]  plot[domain=-1.7823971455629586:1.5520165821775938,variable=\t]({1.*0.5311544590003279*cos(\t r)+0.*0.5311544590003279*sin(\t r)},{0.*0.5311544590003279*cos(\t r)+1.*0.5311544590003279*sin(\t r)});
\draw [shift={(8.677307718772006,5.335347187679803)},line width=0.4pt]  plot[domain=2.6626748763107186:4.496762038732056,variable=\t]({1.*0.5933848566941501*cos(\t r)+0.*0.5933848566941501*sin(\t r)},{0.*0.5933848566941501*cos(\t r)+1.*0.5933848566941501*sin(\t r)});
\draw [shift={(9.178399678787382,6.425982343024167)},line width=0.4pt]  plot[domain=4.4739275061712265:4.798543558392949,variable=\t]({1.*1.776470177083634*cos(\t r)+0.*1.776470177083634*sin(\t r)},{0.*1.776470177083634*cos(\t r)+1.*1.776470177083634*sin(\t r)});
\draw [line width=0.4pt] (4.9096608093953495,5.166072947130226)-- (4.9096608093953495,6.);
\draw [line width=0.4pt] (4.9096608093953495,6.)-- (6.012113555538739,6.);
\draw [line width=0.4pt] (6.012113555538739,6.)-- (6.012113555538738,4.656101112644045);
\draw [line width=0.4pt] (4.9096608093953495,4.836205196315664)-- (4.9096608093953495,4.6365483997700085);
\draw [shift={(8.320040879231563,5.47016480085748)},line width=0.4pt]  plot[domain=1.431914094995293:2.455654002194643,variable=\t]({1.*0.2188592817991089*cos(\t r)+0.*0.2188592817991089*sin(\t r)},{0.*0.2188592817991089*cos(\t r)+1.*0.2188592817991089*sin(\t r)});
\draw (2.3175963869694165,5.808052294164358) node[anchor=north west] {$K$};
\draw (5.35,5.838555152750216) node[anchor=north west] {$K_o$};
\draw (8.7,5.802968484400048) node[anchor=north west] {$K'$};
\begin{scriptsize}
\draw [fill=black] (1.8842141157654901,4.6365483997700085) circle (1.0pt);
\draw [fill=black] (2.9866668619088794,4.656101112644045) circle (1.0pt);
\draw [fill=black] (4.9096608093953495,4.6365483997700085) circle (1.0pt);
\draw [fill=black] (6.012113555538738,4.656101112644045) circle (1.0pt);
\draw [fill=black] (8.228808701624022,4.6365483997700085) circle (1.0pt);
\draw [fill=black] (9.33126144776741,4.656101112644045) circle (1.0pt);
\end{scriptsize}
\end{tikzpicture}
	\caption{For the pictured rail knotoid diagram $K$, the  over companion loop $K_o$ is non-trivial, but the corresponding knotoid diagram $K'$ obtained by forgetting the rails is trivial.}
\label{figure_corespondance}
\end{figure}
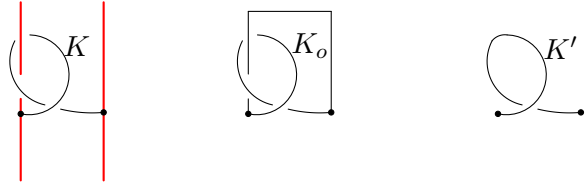

It is reasonable to expect that a better understanding of the similarities and differences between knotoids and rail knotoids will be proved fruitful.

\end{document}